\newtheorem{thm}{Theorem}[section]
\newtheorem{lem}[thm]{Lemma}
\newtheorem{exmp}[thm]{Example}
\numberwithin{equation}{section}
\theoremstyle{definition}
  \let\fiverm\fivrm
\def\@picture(#1,#2)(#3,#4){%
  \@picht #2\unitlength
  \setbox\@picbox\hbox to #1\unitlength\bgroup 
  \let\endpicture=\!latexendpicture
  \let\frame=\!latexframe
  \let\linethickness=\!latexlinethickness
  \let\multiput=\!latexmultiput
  \let\put=\!latexput
  \hskip -#3\unitlength \lower #4\unitlength \hbox\bgroup}
\font\fiverm=cmr5
\def\PiC{P\kern-.12em\lower.5ex\hbox{I}\kern-.075emC}
\def\PiCTeX{\PiC\kern-.11em\TeX}
\def\!ifnextchar#1#2#3{%
  \let\!testchar=#1%
  \def\!first{#2}%
  \def\!second{#3}%
  \futurelet\!nextchar\!testnext}
\def\!testnext{%
  \ifx \!nextchar \!spacetoken 
    \let\!next=\!skipspacetestagain
  \else
    \ifx \!nextchar \!testchar
      \let\!next=\!first
    \else 
      \let\!next=\!second 
    \fi 
  \fi
  \!next}
\def\\{\!skipspacetestagain} 
\def\\ {\futurelet\!nextchar\!testnext} 
\def\\{\let\!spacetoken= } \\  %  ** set \spacetoken to a space token
\def\!tfor#1:=#2\do#3{%
  \edef\!fortemp{#2}%
  \ifx\!fortemp\!empty 
    \else
    \!tforloop#2\!nil\!nil\!!#1{#3}%
  \fi}
\def\!tforloop#1#2\!!#3#4{%
  \def#3{#1}%
  \ifx #3\!nnil
    \let\!nextwhile=\!fornoop
  \else
    #4\relax
    \let\!nextwhile=\!tforloop
  \fi 
  \!nextwhile#2\!!#3{#4}}
\def\!etfor#1:=#2\do#3{%
  \def\!!tfor{\!tfor#1:=}%
  \edef\!!!tfor{#2}%
  \expandafter\!!tfor\!!!tfor\do{#3}}
\def\!cfor#1:=#2\do#3{%
  \edef\!fortemp{#2}%
  \ifx\!fortemp\!empty 
  \else
    \!cforloop#2,\!nil,\!nil\!!#1{#3}%
  \fi}
\def\!cforloop#1,#2\!!#3#4{%
  \def#3{#1}%
  \ifx #3\!nnil
    \let\!nextwhile=\!fornoop 
  \else
    #4\relax
    \let\!nextwhile=\!cforloop
  \fi
  \!nextwhile#2\!!#3{#4}}
\def\!ecfor#1:=#2\do#3{%
  \def\!!cfor{\!cfor#1:=}%
  \edef\!!!cfor{#2}%
  \expandafter\!!cfor\!!!cfor\do{#3}}
\def\!empty{}
\def\!nnil{\!nil}
\def\!fornoop#1\!!#2#3{}
\def\!ifempty#1#2#3{%
  \edef\!emptyarg{#1}%
  \ifx\!emptyarg\!empty
    #2%
  \else
    #3%
  \fi}
\def\!getnext#1\from#2{%
  \expandafter\!gnext#2\!#1#2}%
\def\!gnext\\#1#2\!#3#4{%
  \def#3{#1}%
  \def#4{#2\\{#1}}%
  \ignorespaces}
\def\!getnextvalueof#1\from#2{%
  \expandafter\!gnextv#2\!#1#2}%
\def\!gnextv\\#1#2\!#3#4{%
  #3=#1%
  \def#4{#2\\{#1}}%
  \ignorespaces}
\def\!copylist#1\to#2{%
  \expandafter\!!copylist#1\!#2}
\def\!!copylist#1\!#2{%
  \def#2{#1}\ignorespaces}
\def\!wlet#1=#2{%
  \let#1=#2 
  \wlog{\string#1=\string#2}}
\def\!listaddon#1#2{%
  \expandafter\!!listaddon#2\!{#1}#2}
\def\!!listaddon#1\!#2#3{%
  \def#3{#1\\#2}}
\def\!rightappend#1\withCS#2\to#3{\expandafter\!!rightappend#3\!#2{#1}#3}
\def\!!rightappend#1\!#2#3#4{\def#4{#1#2{#3}}}
\def\!leftappend#1\withCS#2\to#3{\expandafter\!!leftappend#3\!#2{#1}#3}
\def\!!leftappend#1\!#2#3#4{\def#4{#2{#3}#1}}
\def\!lop#1\to#2{\expandafter\!!lop#1\!#1#2}
\def\!!lop\\#1#2\!#3#4{\def#4{#1}\def#3{#2}}
\def\!loop#1\repeat{\def\!body{#1}\!iterate}
\def\!iterate{\!body\let\!next=\!iterate\else\let\!next=\relax\fi\!next}
\def\!!loop#1\repeat{\def\!!body{#1}\!!iterate}
\def\!!iterate{\!!body\let\!!next=\!!iterate\else\let\!!next=\relax\fi\!!next}
\def\!removept#1#2{\edef#2{\expandafter\!!removePT\the#1}}
{\catcode`p=12 \catcode`t=12 \gdef\!!removePT#1pt{#1}}
\def\placevalueinpts of <#1> in #2 {%
  \!removept{#1}{#2}}
\def\!mlap#1{\hbox to 0pt{\hss#1\hss}}
\def\!vmlap#1{\vbox to 0pt{\vss#1\vss}}
\def\!not#1{%
  #1\relax
    \!switchfalse
  \else
    \!switchtrue
  \fi
  \if!switch
  \ignorespaces}
\def\wlog#1{}    
\newdimen\headingtoplotskip     %.A.................
\newdimen\linethickness         %.A..X....U........T
\newdimen\longticklength        %.A................T
\newdimen\plotsymbolspacing     %......D...L....Q...
\newdimen\shortticklength       %.A................T
\newdimen\stackleading          %.A..........P......
\newdimen\tickstovaluesleading  %.A................T
\newdimen\totalarclength        %......D...L....Q...
\newdimen\valuestolabelleading  %.A.................
\newbox\!boxA                   %.AW...............T
\newbox\!boxB                   %..W................
\newbox\!picbox                 %............P......
\newbox\!plotsymbol             %..........L..O.....
\newbox\!putobject              %............PO...S.
\newbox\!shadesymbol            %.................S.
\newdimen\!Xleft                %............P......
\newdimen\!Xright               %............P......
\newdimen\!Xsave                %.A................T
\newdimen\!Ybot                 %............P......
\newdimen\!Ysave                %.A................T
\newdimen\!Ytop                 %............P......
\newdimen\!angle                %........E..........
\newdimen\!arclength            %..W......UL....Q...
\newdimen\!areabloc             %.A........L........
\newdimen\!arealloc             %.A........L........
\newdimen\!arearloc             %.A........L........
\newdimen\!areatloc             %.A........L........
\newdimen\!bshrinkage           %.................S.
\newdimen\!checkbot             %..........L........
\newdimen\!checkleft            %..........L........
\newdimen\!checkright           %..........L........
\newdimen\!checktop             %..........L........
\newdimen\!dimenA               %.AW.X.DVEUL..OYQRST
\newdimen\!dimenB               %....X.DVEU...O.QRS.
\newdimen\!dimenC               %..W.X.DVEU......RS.
\newdimen\!dimenD               %..W.X.DVEU....Y.RS.
\newdimen\!dimenE               %..W........G..YQ.S.
\newdimen\!dimenF               %...........G..YQ.S.
\newdimen\!dimenG               %...........G..YQ.S.
\newdimen\!dimenH               %...........G..Y..S.
\newdimen\!dimenI               %...BX.........Y....
\newdimen\!distacross           %..........L....Q...
\newdimen\!downlength           %..........L........
\newdimen\!dp                   %.A..X.......P....S.
\newdimen\!dshade               %.................S.
\newdimen\!dxpos                %..W......U..P....S.
\newdimen\!dxprime              %...............Q...
\newdimen\!dypos                %..WB.....U..P......
\newdimen\!dyprime              %...............Q...
\newdimen\!ht                   %.A..X.......P....S.
\newdimen\!leaderlength         %......D..U.........
\newdimen\!lshrinkage           %.................S.
\newdimen\!midarclength         %...............Q...
\newdimen\!offset               %.A................T
\newdimen\!plotheadingoffset    %.A.................
\newdimen\!plotsymbolxshift     %..........L..O.....
\newdimen\!plotsymbolyshift     %..........L..O.....
\newdimen\!plotxorigin          %..........L..O.....
\newdimen\!plotyorigin          %..........L..O.....
\newdimen\!rootten              %...........G.......
\newdimen\!rshrinkage           %.................S.
\newdimen\!shadesymbolxshift    %.................S.
\newdimen\!shadesymbolyshift    %.................S.
\newdimen\!tenAa                %...........G.......
\newdimen\!tenAc                %...........G.......
\newdimen\!tenAe                %...........G.......
\newdimen\!tshrinkage           %.................S.
\newdimen\!uplength             %..........L........
\newdimen\!wd                   %....X.......P....S.
\newdimen\!wmax                 %...............Q...
\newdimen\!wmin                 %...............Q...
\newdimen\!xB                   %...............Q...
\newdimen\!xC                   %...............Q...
\newdimen\!xE                   %..W.....E.L....Q.S.
\newdimen\!xM                   %..W.....E......Q.S.
\newdimen\!xS                   %..W.....E.L....Q.S.
\newdimen\!xaxislength          %.A................T
\newdimen\!xdiff                %..........L........
\newdimen\!xleft                %............P......
\newdimen\!xloc                 %..WB.....U.......S.
\newdimen\!xorigin              %.A........L.P....S.
\newdimen\!xpivot               %................R..
\newdimen\!xpos                 %..........L.P..Q.ST
\newdimen\!xprime               %...............Q...
\newdimen\!xright               %............P......
\newdimen\!xshade               %.................S.
\newdimen\!xshift               %..W.........PO...S.
\newdimen\!xtemp                %............P......
\newdimen\!xunit                %.AWBX...EUL.P..QRS.
\newdimen\!xxE                  %........E..........
\newdimen\!xxM                  %........E..........
\newdimen\!xxS                  %........E..........
\newdimen\!xxloc                %..WB....EU.........
\newdimen\!yB                   %...............Q...
\newdimen\!yC                   %...............Q...
\newdimen\!yE                   %..W.....E.L....Q...
\newdimen\!yM                   %..W.....E......Q...
\newdimen\!yS                   %..W.....E.L....Q...
\newdimen\!yaxislength          %.A................T
\newdimen\!ybot                 %............P......
\newdimen\!ydiff                %..........L........
\newdimen\!yloc                 %..WB.....U.......S.
\newdimen\!yorigin              %.A........L.P....S.
\newdimen\!ypivot               %................R..
\newdimen\!ypos                 %..........L.P..Q.ST
\newdimen\!yprime               %...............Q...
\newdimen\!yshade               %.................S.
\newdimen\!yshift               %..W.........PO...S.
\newdimen\!ytemp                %............P......
\newdimen\!ytop                 %............P......
\newdimen\!yunit                %.AWBX...EUL.P..QRS.
\newdimen\!yyE                  %........E..........
\newdimen\!yyM                  %........E..........
\newdimen\!yyS                  %........E..........
\newdimen\!yyloc                %..WB....EU.........
\newdimen\!zpt                  %.AWBX.DVEULGP.YQ.ST
\newif\if!axisvisible           %.A.................
\newif\if!gridlinestoo          %..................T
\newif\if!keepPO                %...................
\newif\if!placeaxislabel        %.A.................
\newif\if!switch                %H..................
\newif\if!xswitch               %.A................T
\newtoks\!axisLaBeL             %.A.................
\newtoks\!keywordtoks           %.A.................
\newwrite\!replotfile           %.............O.....
\def\!cosrotationangle{1}      %................R..
\def\!sinrotationangle{0}      %................R..
\def\!xpivotcoord{0}           %................R..
\def\!xref{0}                  %............P......
\def\!xshadesave{0}            %.................S.
\def\!ypivotcoord{0}           %................R..
\def\!yref{0}                  %............P......
\def\!yshadesave{0}            %.................S.
\def\!zero{0}                  %..................T
\let\wlog=\!!!wlog
\def\normalgraphs{%
  \longticklength=.4\baselineskip
  \shortticklength=.25\baselineskip
  \tickstovaluesleading=.25\baselineskip
  \valuestolabelleading=.8\baselineskip
  \linethickness=.4pt
  \stackleading=.17\baselineskip
  \headingtoplotskip=1.5\baselineskip
  \visibleaxes
  \ticksout
  \nogridlines
  \unloggedticks}
\def\setplotarea x from #1 to #2, y from #3 to #4 {%
  \!arealloc=\!M{#1}\!xunit \advance \!arealloc -\!xorigin
  \!areabloc=\!M{#3}\!yunit \advance \!areabloc -\!yorigin
  \!arearloc=\!M{#2}\!xunit \advance \!arearloc -\!xorigin
  \!areatloc=\!M{#4}\!yunit \advance \!areatloc -\!yorigin
  \!initinboundscheck
  \!xaxislength=\!arearloc  \advance\!xaxislength -\!arealloc
  \!yaxislength=\!areatloc  \advance\!yaxislength -\!areabloc
  \!plotheadingoffset=\!zpt
  \!dimenput {{\setbox0=\hbox{}\wd0=\!xaxislength\ht0=\!yaxislength\box0}}
     [bl] (\!arealloc,\!areabloc)}
\def\visibleaxes{%
  \def\!axisvisibility{\!axisvisibletrue}}
\def\!fixkeyword#1{%
  \errhelp=\!keywordhelp
  \errmessage{Unrecognized keyword `#1': \the\!keywordtoks{NEW KEYWORD}'}}
\def\fixkeyword#1{%
  \!nextkeyword#1 }
\def\axis {%
  \def\!nextkeyword##1 {%
    \expandafter\ifx\csname !axis##1\endcsname \relax
      \def\!next{\!fixkeyword{##1}}%
    \else
      \def\!next{\csname !axis##1\endcsname}%
    \fi
    \!next}%
  \!offset=\!zpt
  \!axisvisibility
  \!placeaxislabelfalse
  \!nextkeyword}
\def\!axisbottom{%
  \!axisylevel=\!areabloc
  \def\!tickxsign{0}%
  \def\!tickysign{-}%
  \def\!axissetup{\!axisxsetup}%
  \def\!axislabeltbrl{t}%
  \!nextkeyword}
\def\!axistop{%
  \!axisylevel=\!areatloc
  \def\!tickxsign{0}%
  \def\!tickysign{+}%
  \def\!axissetup{\!axisxsetup}%
  \def\!axislabeltbrl{b}%
  \!nextkeyword}
\def\!axisleft{%
  \!axisxlevel=\!arealloc
  \def\!tickxsign{-}%
  \def\!tickysign{0}%
  \def\!axissetup{\!axisysetup}%
  \def\!axislabeltbrl{r}%
  \!nextkeyword}
\def\!axisright{%
  \!axisxlevel=\!arearloc
  \def\!tickxsign{+}%
  \def\!tickysign{0}%
  \def\!axissetup{\!axisysetup}%
  \def\!axislabeltbrl{l}%
  \!nextkeyword}
\def\!axisshiftedto#1=#2 {%
  \if 0\!tickxsign
    \!axisylevel=\!M{#2}\!yunit
    \advance\!axisylevel -\!yorigin
  \else
    \!axisxlevel=\!M{#2}\!xunit
    \advance\!axisxlevel -\!xorigin
  \fi
  \!nextkeyword}
\def\!axisvisible{%
  \!axisvisibletrue  
  \!nextkeyword}
\def\!axisinvisible{%
  \!axisvisiblefalse
  \!nextkeyword}
\def\!axislabel#1 {%
  \!axisLaBeL={#1}%
  \!placeaxislabeltrue
  \!nextkeyword}
\def\csname !axis/\endcsname{%
  \!axissetup % This could done already by "ticks"; if so, now \relax
  \if!placeaxislabel
    \!placeaxislabel
  \fi
  \if +\!tickysign %                 ** (A "top" axis)
    \!dimenA=\!axisylevel
    \advance\!dimenA \!offset %      ** dimA = top of the axis structure
    \advance\!dimenA -\!areatloc %   ** dimA = excess over the plot area
    \ifdim \!dimenA>\!plotheadingoffset
      \!plotheadingoffset=\!dimenA % ** Greatest excess over the plot area
    \fi
  \fi}
\def\grid #1 #2 {%
  \!countA=#1\advance\!countA 1
  \axis bottom invisible ticks length <\!zpt> andacross quantity {\!countA} /
  \!countA=#2\advance\!countA 1
  \axis left   invisible ticks length <\!zpt> andacross quantity {\!countA} / }
\def\plotheading#1 {%
  \advance\!plotheadingoffset \headingtoplotskip
  \!dimenput {#1} [B] <.5\!xaxislength,\!plotheadingoffset>
    (\!arealloc,\!areatloc)}
\def\!axisxsetup{%
  \!axisxlevel=\!arealloc
  \!axisstart=\!arealloc
  \!axisend=\!arearloc
  \!axisLength=\!xaxislength
  \!!origin=\!xorigin
  \!!unit=\!xunit
  \!xswitchtrue
  \if!axisvisible 
    \!makeaxis
  \fi}
\def\!axisysetup{%
  \!axisylevel=\!areabloc
  \!axisstart=\!areabloc
  \!axisend=\!areatloc
  \!axisLength=\!yaxislength
  \!!origin=\!yorigin
  \!!unit=\!yunit
  \!xswitchfalse
  \if!axisvisible
    \!makeaxis
  \fi}
\def\!makeaxis{%
  \setbox\!boxA=\hbox{% (Make a pseudo-y[x] tick for an x[y]-axis)
    \beginpicture
      \!setdimenmode
      \setcoordinatesystem point at {\!zpt} {\!zpt}   
      \putrule from {\!zpt} {\!zpt} to
        {\!tickysign\!tickysign\!axisLength} 
        {\!tickxsign\!tickxsign\!axisLength}
    \endpicturesave <\!Xsave,\!Ysave>}%
    \wd\!boxA=\!zpt
    \!placetick\!axisstart}
\def\!placeaxislabel{%
  \advance\!offset \valuestolabelleading
  \if!xswitch
    \!dimenput {\the\!axisLaBeL} [\!axislabeltbrl]
      <.5\!axisLength,\!tickysign\!offset> (\!axisxlevel,\!axisylevel)
    \advance\!offset \!dp  % ** advance offset by the "tallness"
    \advance\!offset \!ht  % ** of the label
  \else
    \!dimenput {\the\!axisLaBeL} [\!axislabeltbrl]
      <\!tickxsign\!offset,.5\!axisLength> (\!axisxlevel,\!axisylevel)
  \fi
  \!axisLaBeL={}}
\def\arrow <#1> [#2,#3]{%
  \!ifnextchar<{\!arrow{#1}{#2}{#3}}{\!arrow{#1}{#2}{#3}<\!zpt,\!zpt> }}
\def\!arrow#1#2#3<#4,#5> from #6 #7 to #8 #9 {%
%
% ** convert to dimensions
  \!xloc=\!M{#8}\!xunit   
  \!yloc=\!M{#9}\!yunit
  \!dxpos=\!xloc  \!dimenA=\!M{#6}\!xunit  \advance \!dxpos -\!dimenA
  \!dypos=\!yloc  \!dimenA=\!M{#7}\!yunit  \advance \!dypos -\!dimenA
  \let\!MAH=\!M%                         ** save current c/d mode
  \!setdimenmode%                        ** go into dimension mode
  \!xshift=#4\relax  \!yshift=#5\relax%  ** pick up shift
  \!reverserotateonly\!xshift\!yshift%   ** back rotate shift
  \advance\!xshift\!xloc  \advance\!yshift\!yloc
%
% **  draw shaft of arrow
  \!xS=-\!dxpos  \advance\!xS\!xshift
  \!yS=-\!dypos  \advance\!yS\!yshift
  \!start (\!xS,\!yS)
  \!ljoin (\!xshift,\!yshift)
%
% ** find 32*cosine and 32*sine of angle of rotation
  \!Pythag\!dxpos\!dypos\!arclength
  \!divide\!dxpos\!arclength\!dxpos  
  \!dxpos=32\!dxpos  \!removept\!dxpos\!!cos
  \!divide\!dypos\!arclength\!dypos  
  \!dypos=32\!dypos  \!removept\!dypos\!!sin
% 
% ** construct arrowhead
  \!halfhead{#1}{#2}{#3}%                ** draw half of arrow head
  \!halfhead{#1}{-#2}{-#3}%              ** draw other half
  \let\!M=\!MAH%                         ** restore old c/d mode
  \ignorespaces}
  \def\!halfhead#1#2#3{%
    \!dimenC=-#1%                
    \divide \!dimenC 2 %                 ** half way back
    \!dimenD=#2\!dimenC%                 ** half the mid width
    \!rotate(\!dimenC,\!dimenD)by(\!!cos,\!!sin)to(\!xM,\!yM)
    \!dimenC=-#1%                        ** all the way back
    \!dimenD=#3\!dimenC
    \!dimenD=.5\!dimenD%                 ** half the full width
    \!rotate(\!dimenC,\!dimenD)by(\!!cos,\!!sin)to(\!xE,\!yE)
    \!start (\!xshift,\!yshift)
    \advance\!xM\!xshift  \advance\!yM\!yshift
    \advance\!xE\!xshift  \advance\!yE\!yshift
    \!qjoin (\!xM,\!yM) (\!xE,\!yE) 
    \ignorespaces}
\def\betweenarrows #1#2 from #3 #4 to #5 #6 {%
  \!xloc=\!M{#3}\!xunit  \!xxloc=\!M{#5}\!xunit%   
  \!yloc=\!M{#4}\!yunit  \!yyloc=\!M{#6}\!yunit%           
  \!dxpos=\!xxloc  \advance\!dxpos by -\!xloc
  \!dypos=\!yyloc  \advance\!dypos by -\!yloc
  \advance\!xloc .5\!dxpos
  \advance\!yloc .5\!dypos
  \let\!MBA=\!M%           ** save current coord\dimen mode
  \!setdimenmode%          ** express locations in dimens
  \ifdim\!dypos=\!zpt
    \ifdim\!dxpos<\!zpt \!dxpos=-\!dxpos \fi
    \put {\!lrarrows{\!dxpos}{#1}}#2{} at {\!xloc} {\!yloc}
  \else
    \ifdim\!dxpos=\!zpt
      \ifdim\!dypos<\!zpt \!dypos=-\!zpt \fi
      \put {\!udarrows{\!dypos}{#1}}#2{} at {\!xloc} {\!yloc}
    \fi
  \fi
  \let\!M=\!MBA%           ** restore previous c/d mode
  \ignorespaces}
\def\!lrarrows#1#2{% #1=width, #2=text
  {\setbox\!boxA=\hbox{$\mkern-2mu\mathord-\mkern-2mu$}%
   \setbox\!boxB=\hbox{$\leftarrow$}\!dimenE=\ht\!boxB
   \setbox\!boxB=\hbox{}\ht\!boxB=2\!dimenE
   \hbox to #1{$\mathord\leftarrow\mkern-6mu
     \cleaders\copy\!boxA\hfil
     \mkern-6mu\mathord-$%
     \kern.4em $\vcenter{\box\!boxB}$$\vcenter{\hbox{#2}}$\kern.4em
     $\mathord-\mkern-6mu
     \cleaders\copy\!boxA\hfil
     \mkern-6mu\mathord\rightarrow$}}}
\def\!udarrows#1#2{% #1=width, #2=text
  {\setbox\!boxB=\hbox{#2}%
   \setbox\!boxA=\hbox to \wd\!boxB{\hss$\vert$\hss}%
   \!dimenE=\ht\!boxA \advance\!dimenE \dp\!boxA \divide\!dimenE 2
   \vbox to #1{\offinterlineskip
      \vskip .05556\!dimenE
      \hbox to \wd\!boxB{\hss$\mkern.4mu\uparrow$\hss}\vskip-\!dimenE
      \cleaders\copy\!boxA\vfil
      \vskip-\!dimenE\copy\!boxA
      \vskip\!dimenE\copy\!boxB\vskip.4em
      \copy\!boxA\vskip-\!dimenE
      \cleaders\copy\!boxA\vfil
      \vskip-\!dimenE \hbox to \wd\!boxB{\hss$\mkern.4mu\downarrow$\hss}
      \vskip .05556\!dimenE}}}
\def\putbar#1breadth <#2> from #3 #4 to #5 #6 {%
  \!xloc=\!M{#3}\!xunit  \!xxloc=\!M{#5}\!xunit%   
  \!yloc=\!M{#4}\!yunit  \!yyloc=\!M{#6}\!yunit%           
  \!dypos=\!yyloc  \advance\!dypos by -\!yloc
  \!dimenI=#2  
  \ifdim \!dimenI=\!zpt %            ** If 0 breadth
    \putrule#1from {#3} {#4} to {#5} {#6} % ** Then draw line
  \else %                            ** Else, put in a rectangle
    \let\!MBar=\!M%                  ** save current c/d mode
    \!setdimenmode %                 ** go into dimension mode
    \divide\!dimenI 2
    \ifdim \!dypos=\!zpt             
      \advance \!yloc -\!dimenI %    ** Equal y coordinates
      \advance \!yyloc \!dimenI
    \else
      \advance \!xloc -\!dimenI %    ** Equal x coordinates
      \advance \!xxloc \!dimenI
    \fi
    \putrectangle#1corners at {\!xloc} {\!yloc} and {\!xxloc} {\!yyloc}
    \let\!M=\!MBar %                 ** restore c/d mode
  \fi
  \ignorespaces}
\def\setbars#1breadth <#2> baseline at #3 = #4 {%
  \edef\!barshift{#1}%
  \edef\!barbreadth{#2}%
  \edef\!barorientation{#3}%
  \edef\!barbaseline{#4}%
  \def\!bardobaselabel{\!bardoendlabel}%
  \def\!bardoendlabel{\!barfinish}%
  \let\!drawcurve=\!barcurve
  \!setbars}
\def\!setbars{%
  \futurelet\!nextchar\!!setbars}
\def\!!setbars{%
  \if b\!nextchar
    \def\!!!setbars{\!setbarsbget}%
  \else 
    \if e\!nextchar
      \def\!!!setbars{\!setbarseget}%
    \else
      \def\!!!setbars{\relax}%
    \fi
  \fi
  \!!!setbars}
\def\!setbarsbget baselabels (#1) {%
  \def\!barbaselabelorientation{#1}%
  \def\!bardobaselabel{\!!bardobaselabel}%
  \!setbars}
\def\!setbarseget endlabels (#1) {%
  \edef\!barendlabelorientation{#1}%
  \def\!bardoendlabel{\!!bardoendlabel}%
  \!setbars}
\def\!barcurve #1 #2 {%
  \if y\!barorientation
    \def\!basexarg{#1}%
    \def\!baseyarg{\!barbaseline}%
  \else
    \def\!basexarg{\!barbaseline}%
    \def\!baseyarg{#2}%
  \fi
  \expandafter\putbar\!barshift breadth <\!barbreadth> from {\!basexarg}
    {\!baseyarg} to {#1} {#2}
  \def\!endxarg{#1}%
  \def\!endyarg{#2}%
  \!bardobaselabel}
\def\!!bardobaselabel "#1" {%
  \put {#1}\!barbaselabelorientation{} at {\!basexarg} {\!baseyarg}
  \!bardoendlabel}
\def\!!bardoendlabel "#1" {%
  \put {#1}\!barendlabelorientation{} at {\!endxarg} {\!endyarg}
  \!barfinish}
\def\!barfinish{%
  \!ifnextchar/{\!finish}{\!barcurve}}
\def\putrectangle{%
  \!ifnextchar<{\!putrectangle}{\!putrectangle<\!zpt,\!zpt> }}
\def\!putrectangle<#1,#2> corners at #3 #4 and #5 #6 {%
%
% ** get locations
  \!xone=\!M{#3}\!xunit  \!xtwo=\!M{#5}\!xunit%   
  \!yone=\!M{#4}\!yunit  \!ytwo=\!M{#6}\!yunit%           
  \ifdim \!xtwo<\!xone
    \!dimenI=\!xone  \!xone=\!xtwo  \!xtwo=\!dimenI
  \fi
  \ifdim \!ytwo<\!yone
    \!dimenI=\!yone  \!yone=\!ytwo  \!ytwo=\!dimenI
  \fi
  \!dimenI=#1\relax  \advance\!xone\!dimenI  \advance\!xtwo\!dimenI
  \!dimenI=#2\relax  \advance\!yone\!dimenI  \advance\!ytwo\!dimenI
  \let\!MRect=\!M%                  ** save current coord/dimen mode
  \!setdimenmode
%
% ** shade rectangle if appropriate
  \!shaderectangle
%
% ** draw horizontal edges
  \!dimenI=.5\linethickness
  \advance \!xone  -\!dimenI%       ** adjust x-location to overlap corners
  \advance \!xtwo   \!dimenI%       ** ditto
  \putrule from {\!xone} {\!yone} to {\!xtwo} {\!yone} 
  \putrule from {\!xone} {\!ytwo} to {\!xtwo} {\!ytwo} 
%
% ** draw vertical edges
  \advance \!xone   \!dimenI%       ** restore original x-values
  \advance \!xtwo  -\!dimenI% 
  \advance \!yone  -\!dimenI%       ** adjust y-location to overlap corners
  \advance \!ytwo   \!dimenI%       ** ditto
  \putrule from {\!xone} {\!yone} to {\!xone} {\!ytwo} 
  \putrule from {\!xtwo} {\!yone} to {\!xtwo} {\!ytwo} 
  \let\!M=\!MRect%                  ** restore coord/dimen mode
  \ignorespaces}
\def\shaderectanglesoff{%
  \def\!shaderectangle{}%
  \ignorespaces}
\def\!!shaderectangle{%
  \!dimenA=\!xtwo  \advance \!dimenA -\!xone
  \!dimenB=\!ytwo  \advance \!dimenB -\!yone
  \ifdim \!dimenA<\!dimenB
    \!startvshade (\!xone,\!yone,\!ytwo)
    \!lshade      (\!xtwo,\!yone,\!ytwo)
  \else
    \!starthshade (\!yone,\!xone,\!xtwo)
    \!lshade      (\!ytwo,\!xone,\!xtwo)
  \fi
  \ignorespaces}
\def\frame{%
  \!ifnextchar<{\!frame}{\!frame<\!zpt> }}
\long\def\!frame<#1> #2{%
  \beginpicture
    \setcoordinatesystem units <1pt,1pt> point at 0 0 
    \put {#2} [Bl] at 0 0 
    \!dimenA=#1\relax
    \!dimenB=\!wd \advance \!dimenB \!dimenA
    \!dimenC=\!ht \advance \!dimenC \!dimenA
    \!dimenD=\!dp \advance \!dimenD \!dimenA
    \let\!MFr=\!M
    \!setdimenmode
    \putrectangle corners at {-\!dimenA} {-\!dimenD} and {\!dimenB} {\!dimenC}
    \!setcoordmode
    \let\!M=\!MFr
  \endpicture
  \ignorespaces}
\def\rectangle <#1> <#2> {%
  \setbox0=\hbox{}\wd0=#1\ht0=#2\frame {\box0}}
\def\plot{%
  \!ifnextchar"{\!plotfromfile}{\!drawcurve}}
\def\!plotfromfile"#1"{%
  \expandafter\!drawcurve \input #1 /}
\def\setquadratic{%
  \let\!drawcurve=\!qcurve
  \let\!!Shade=\!!qShade
  \let\!!!Shade=\!!!qShade}
\def\setlinear{%
  \let\!drawcurve=\!lcurve
  \let\!!Shade=\!!lShade
  \let\!!!Shade=\!!!lShade}
\def\sethistograms{%
  \let\!drawcurve=\!hcurve}
\def\!qcurve #1 #2 {%
  \!start (#1,#2)
  \!Qjoin}
\def\!Qjoin#1 #2 #3 #4 {%
  \!qjoin (#1,#2) (#3,#4)             % \!qjoin  is defined in QUADRATIC
  \!ifnextchar/{\!finish}{\!Qjoin}}
\def\!lcurve #1 #2 {%
  \!start (#1,#2)
  \!Ljoin}
\def\!Ljoin#1 #2 {%
  \!ljoin (#1,#2)                    % \!ljoin  is defined in LINEAR
  \!ifnextchar/{\!finish}{\!Ljoin}}
\def\!finish/{\ignorespaces}
\def\!hcurve #1 #2 {%
  \edef\!hxS{#1}%
  \edef\!hyS{#2}%
  \!hjoin}
\def\!hjoin#1 #2 {%
  \putrectangle corners at {\!hxS} {\!hyS} and {#1} {#2}
  \edef\!hxS{#1}%
  \!ifnextchar/{\!finish}{\!hjoin}}
\def\vshade #1 #2 #3 {%
  \!startvshade (#1,#2,#3)
  \!Shadewhat}
\def\hshade #1 #2 #3 {%
  \!starthshade (#1,#2,#3)
  \!Shadewhat}
\def\!Shadewhat{%
  \futurelet\!nextchar\!Shade}
\def\!Shade{%
  \if <\!nextchar
    \def\!nextShade{\!!Shade}%
  \else
    \if /\!nextchar
      \def\!nextShade{\!finish}%
    \else
      \def\!nextShade{\!!!Shade}%
    \fi
  \fi
  \!nextShade}
\def\!!lShade<#1> #2 #3 #4 {%
  \!lshade <#1> (#2,#3,#4)                 % \!lshade is defined in SHADING
  \!Shadewhat}
\def\!!!lShade#1 #2 #3 {%
  \!lshade (#1,#2,#3)
  \!Shadewhat} 
\def\!!qShade<#1> #2 #3 #4 #5 #6 #7 {%
  \!qshade <#1> (#2,#3,#4) (#5,#6,#7)      % \!qshade is defined in SHADING
  \!Shadewhat}
\def\!!!qShade#1 #2 #3 #4 #5 #6 {%
  \!qshade (#1,#2,#3) (#4,#5,#6)
  \!Shadewhat} 
\def\setdashpattern <#1>{%
  \def\!Flist{}\def\!Blist{}\def\!UDlist{}%
  \!countA=0
  \!ecfor\!item:=#1\do{%
    \!dimenA=\!item\relax
    \expandafter\!rightappend\the\!dimenA\withCS{\\}\to\!UDlist%
    \advance\!countA  1
    \ifodd\!countA
      \expandafter\!rightappend\the\!dimenA\withCS{\!Rule}\to\!Flist%
      \expandafter\!leftappend\the\!dimenA\withCS{\!Rule}\to\!Blist%
    \else 
      \expandafter\!rightappend\the\!dimenA\withCS{\!Skip}\to\!Flist%
      \expandafter\!leftappend\the\!dimenA\withCS{\!Skip}\to\!Blist%
    \fi}%
  \!leaderlength=\!zpt
  \def\!Rule##1{\advance\!leaderlength  ##1}%
  \def\!Skip##1{\advance\!leaderlength  ##1}%
  \!Flist%
  \ifdim\!leaderlength>\!zpt 
  \else
    \def\!Flist{\!Skip{24in}}\def\!Blist{\!Skip{24in}}\ignorespaces
    \def\!UDlist{\\{\!zpt}\\{24in}}\ignorespaces
    \!leaderlength=24in
  \fi
  \!dashingon}
\def\!dashingon{%
  \def\!advancedashing{\!!advancedashing}%
  \def\!drawlinearsegment{\!lineardashed}%
  \def\!puthline{\!putdashedhline}%
  \def\!putvline{\!putdashedvline}%
  \ignorespaces}% 
\def\!dashingoff{%
  \def\!advancedashing{\relax}%
  \def\!drawlinearsegment{\!linearsolid}%
  \def\!puthline{\!putsolidhline}%
  \def\!putvline{\!putsolidvline}%
  \ignorespaces}
\def\setdots{%
  \!ifnextchar<{\!setdots}{\!setdots<5pt>}}
\def\!setdots<#1>{%
  \!dimenB=#1\advance\!dimenB -\plotsymbolspacing
  \ifdim\!dimenB<\!zpt
    \!dimenB=\!zpt
  \fi
\setdashpattern <\plotsymbolspacing,\!dimenB>}
\def\setdotsnear <#1> for <#2>{%
  \!dimenB=#2\relax  \advance\!dimenB -.05pt  
  \!dimenC=#1\relax  \!countA=\!dimenC 
  \!dimenD=\!dimenB  \advance\!dimenD .5\!dimenC  \!countB=\!dimenD
  \divide \!countB  \!countA
  \ifnum 1>\!countB 
    \!countB=1
  \fi
  \divide\!dimenB  \!countB
  \setdots <\!dimenB>}
\def\setdashes{%
  \!ifnextchar<{\!setdashes}{\!setdashes<5pt>}}
\def\!setdashes<#1>{\setdashpattern <#1,#1>}
\def\setdashesnear <#1> for <#2>{%
  \!dimenB=#2\relax  
  \!dimenC=#1\relax  \!countA=\!dimenC 
  \!dimenD=\!dimenB  \advance\!dimenD .5\!dimenC  \!countB=\!dimenD
  \divide \!countB  \!countA
  \ifodd \!countB 
  \else 
    \advance \!countB  1
  \fi
  \divide\!dimenB  \!countB
  \setdashes <\!dimenB>}
\def\setsolid{%
  \def\!Flist{\!Rule{24in}}\def\!Blist{\!Rule{24in}}%  
  \def\!UDlist{\\{24in}\\{\!zpt}}%
  \!dashingoff}  
\def\!divide#1#2#3{%
  \!dimenB=#1%                      **  dimB  holds current remainder (r)
  \!dimenC=#2%                      **  dimC  holds divisor (d)
  \!dimenD=\!dimenB%                **  dimD  holds quotient q=r/d for this 
  \divide \!dimenD \!dimenC%        **    step, in units of scaled pts
  \!dimenA=\!dimenD%                **  dimA  eventually holds answer (a)
  \multiply\!dimenD \!dimenC%       **  r <-- r - dq
  \advance\!dimenB -\!dimenD%       **  First step complete. Have integer part
%                                   **  of a, and corresponding remainder.
  \!dimenD=\!dimenC%                **  Temporarily use dimD to hold |d|
    \ifdim\!dimenD<\!zpt \!dimenD=-\!dimenD 
  \fi
  \ifdim\!dimenD<64pt%              **  Branch on the magnitude of |d|
    \!divstep[\!tfs]\!divstep[\!tfs]%
  \else 
    \!!divide
  \fi
  #3=\!dimenA\ignorespaces}
\def\!!divide{%
  \ifdim\!dimenD<256pt
    \!divstep[64]\!divstep[32]\!divstep[32]%
  \else 
    \!divstep[8]\!divstep[8]\!divstep[8]\!divstep[8]\!divstep[8]%
    \!dimenA=2\!dimenA
  \fi}
\def\!divstep[#1]{%                 **  #1 = "B"
  \!dimenB=#1\!dimenB%              **  r <-- B*r
  \!dimenD=\!dimenB%                **  dimD  holds quotient q=r/d for this 
    \divide \!dimenD by \!dimenC%   **    step, in units of scaled pts
  \!dimenA=#1\!dimenA%              **  a <-- B*a + q
    \advance\!dimenA by \!dimenD%
  \multiply\!dimenD by \!dimenC%    **  r <-- r - dq
    \advance\!dimenB by -\!dimenD}
\def\Divide <#1> by <#2> forming <#3> {%
  \!divide{#1}{#2}{#3}}
\def\ellipticalarc axes ratio #1:#2 #3 degrees from #4 #5 center at #6 #7 {%
  \!angle=#3pt\relax%                    ** get angle
  \ifdim\!angle>\!zpt 
    \def\!sign{}%                        ** counterclockwise
  \else 
    \def\!sign{-}\!angle=-\!angle%       ** clockwise
  \fi
  \!xxloc=\!M{#6}\!xunit%                ** convert CENTER to dimension
  \!yyloc=\!M{#7}\!yunit     
  \!xxS=\!M{#4}\!xunit%                  ** get STARTing point on rim of ellipse
  \!yyS=\!M{#5}\!yunit
  \advance\!xxS -\!xxloc%                ** make center of ellipse (0,0)
  \advance\!yyS -\!yyloc
  \!divide\!xxS{#1pt}\!xxS %             ** scale point on ellipse to point on 
  \!divide\!yyS{#2pt}\!yyS %                 corresponding circle
  \let\!MC=\!M%                          ** save current c/d mode
  \!setdimenmode%                        ** go into dimension mode
  \!xS=#1\!xxS  \advance\!xS\!xxloc
  \!yS=#2\!yyS  \advance\!yS\!yyloc
  \!start (\!xS,\!yS)%
  \!loop\ifdim\!angle>14.9999pt%         ** draw in major portion of ellipse 
    \!rotate(\!xxS,\!yyS)by(\!cos,\!sign\!sin)to(\!xxM,\!yyM) 
    \!rotate(\!xxM,\!yyM)by(\!cos,\!sign\!sin)to(\!xxE,\!yyE)
    \!xM=#1\!xxM  \advance\!xM\!xxloc  \!yM=#2\!yyM  \advance\!yM\!yyloc
    \!xE=#1\!xxE  \advance\!xE\!xxloc  \!yE=#2\!yyE  \advance\!yE\!yyloc
    \!qjoin (\!xM,\!yM) (\!xE,\!yE)
    \!xxS=\!xxE  \!yyS=\!yyE 
    \advance \!angle -15pt
  \repeat
  \ifdim\!angle>\!zpt%                   ** complete remaining arc, if any
    \!angle=100.53096\!angle%            ** convert angle to radians, divide
    \divide \!angle 360 %                **   by 2, and multiply by 32
    \!sinandcos\!angle\!!sin\!!cos%      ** get 32*sin & 32*cos
    \!rotate(\!xxS,\!yyS)by(\!!cos,\!sign\!!sin)to(\!xxM,\!yyM) 
    \!rotate(\!xxM,\!yyM)by(\!!cos,\!sign\!!sin)to(\!xxE,\!yyE)
    \!xM=#1\!xxM  \advance\!xM\!xxloc  \!yM=#2\!yyM  \advance\!yM\!yyloc
    \!xE=#1\!xxE  \advance\!xE\!xxloc  \!yE=#2\!yyE  \advance\!yE\!yyloc
    \!qjoin (\!xM,\!yM) (\!xE,\!yE)
  \fi
  \let\!M=\!MC%                          ** restore c/d mode
  \ignorespaces}%                        **   if appropriate
\def\!rotate(#1,#2)by(#3,#4)to(#5,#6){% 
  \!dimenA=#3#1\advance \!dimenA -#4#2%   ** Rcos(x+t)=Rcosx*cost - Rsinx*sint
  \!dimenB=#3#2\advance \!dimenB  #4#1%   ** Rsin(x+t)=Rsinx*cost + Rcosx*sint
  \divide \!dimenA 32  \divide \!dimenB 32 
  #5=\!dimenA  #6=\!dimenB
  \ignorespaces}
\def\!sin{4.17684}%                       ** 32*sin(pi/24) (pi/24=7.5deg)
\def\!cos{31.72624}%                      ** 32*cos(pi/24)
\def\!sinandcos#1#2#3{%
 \!dimenD=#1%                **  angle is expressed in radians/32: 1pt = 1/32rad
 \!dimenA=\!dimenD%          **  dimA will eventually contain 32sin(angle)in pts
 \!dimenB=32pt%              **  dimB will eventually contain 32cos(angle)in pts
 \!removept\!dimenD\!value%  **  get value of 32*angle, without "pt"
 \!dimenC=\!dimenD%          **  holds 32*angle**i/i! in pts
 \!dimenC=\!value\!dimenC \divide\!dimenC by 64 %   ** now 32*angle**2/2
 \advance\!dimenB by -\!dimenC%                     ** 32-32*angle**2/2
 \!dimenC=\!value\!dimenC \divide\!dimenC by 96 %   ** now 32*angle**3/3!
 \advance\!dimenA by -\!dimenC%                     ** now 32*(angle-angle**3/6)
 \!dimenC=\!value\!dimenC \divide\!dimenC by 128 %  ** now 32*angle**4/4!
 \advance\!dimenB by \!dimenC%
 \!removept\!dimenA#2%                              ** set 32*sin(angle)
 \!removept\!dimenB#3%                              ** set 32*cos(angle)
 \ignorespaces}
\def\putrule#1from #2 #3 to #4 #5 {%
  \!xloc=\!M{#2}\!xunit  \!xxloc=\!M{#4}\!xunit%   
  \!yloc=\!M{#3}\!yunit  \!yyloc=\!M{#5}\!yunit%           
  \!dxpos=\!xxloc  \advance\!dxpos by -\!xloc
  \!dypos=\!yyloc  \advance\!dypos by -\!yloc
  \ifdim\!dypos=\!zpt
    \def\!!Line{\!puthline{#1}}\ignorespaces
  \else
    \ifdim\!dxpos=\!zpt
      \def\!!Line{\!putvline{#1}}\ignorespaces
    \else 
       \def\!!Line{}
    \fi
  \fi
  \let\!ML=\!M%           ** save current coord\dimen mode
  \!setdimenmode%         ** express locations in dimens
  \!!Line%
  \let\!M=\!ML%           ** restore previous c/d mode
  \ignorespaces}
\def\!putsolidhline#1{%
  \ifdim\!dxpos>\!zpt 
    \put{\!hline\!dxpos}#1[l] at {\!xloc} {\!yloc}
  \else 
    \put{\!hline{-\!dxpos}}#1[l] at {\!xxloc} {\!yyloc}
  \fi
  \ignorespaces}
\def\!putsolidvline#1{%
  \ifdim\!dypos>\!zpt 
    \put{\!vline\!dypos}#1[b] at {\!xloc} {\!yloc}
  \else 
    \put{\!vline{-\!dypos}}#1[b] at {\!xxloc} {\!yyloc}
  \fi
  \ignorespaces}
\def\!hline#1{\hbox to #1{\leaders \hrule height\linethickness\hfill}}
\def\!vline#1{\vbox to #1{\leaders \vrule width\linethickness\vfill}}
\def\!putdashedhline#1{%
  \ifdim\!dxpos>\!zpt 
    \!DLsetup\!Flist\!dxpos
    \put{\hbox to \!totalleaderlength{\!hleaders}\!hpartialpattern\!Rtrunc}
      #1[l] at {\!xloc} {\!yloc} 
  \else 
    \!DLsetup\!Blist{-\!dxpos}
    \put{\!hpartialpattern\!Ltrunc\hbox to \!totalleaderlength{\!hleaders}}
      #1[r] at {\!xloc} {\!yloc} 
  \fi
  \ignorespaces}
\def\!putdashedvline#1{%
  \!dypos=-\!dypos%            ** vertical leaders go from top to bottom
  \ifdim\!dypos>\!zpt 
    \!DLsetup\!Flist\!dypos 
    \put{\vbox{\vbox to \!totalleaderlength{\!vleaders}
      \!vpartialpattern\!Rtrunc}}#1[t] at {\!xloc} {\!yloc} 
  \else 
    \!DLsetup\!Blist{-\!dypos}
    \put{\vbox{\!vpartialpattern\!Ltrunc
      \vbox to \!totalleaderlength{\!vleaders}}}#1[b] at {\!xloc} {\!yloc} 
  \fi
  \ignorespaces}
\def\!DLsetup#1#2{%            ** Dashed-Line set up
  \let\!RSlist=#1%             ** set !Rule-Skip list
  \!countB=#2%                 ** convert rule length to integer (number of sps)
  \!countA=\!leaderlength%     ** ditto, leaderlength
  \divide\!countB by \!countA% ** number of complete leader units
  \!totalleaderlength=\!countB\!leaderlength
  \!Rresiduallength=#2%
  \advance \!Rresiduallength by -\!totalleaderlength%  \** excess length
  \!Lresiduallength=\!leaderlength
  \advance \!Lresiduallength by -\!Rresiduallength
  \ignorespaces}
\def\!hleaders{%
  \def\!Rule##1{\vrule height\linethickness width##1}%
  \def\!Skip##1{\hskip##1}%
  \leaders\hbox{\!RSlist}\hfill}
\def\!hpartialpattern#1{%
  \!dimenA=\!zpt \!dimenB=\!zpt 
  \def\!Rule##1{#1{##1}\vrule height\linethickness width\!dimenD}%
  \def\!Skip##1{#1{##1}\hskip\!dimenD}%
  \!RSlist}
\def\!vleaders{%
  \def\!Rule##1{\hrule width\linethickness height##1}%
  \def\!Skip##1{\vskip##1}%
  \leaders\vbox{\!RSlist}\vfill}
\def\!vpartialpattern#1{%
  \!dimenA=\!zpt \!dimenB=\!zpt 
  \def\!Rule##1{#1{##1}\hrule width\linethickness height\!dimenD}%
  \def\!Skip##1{#1{##1}\vskip\!dimenD}%
  \!RSlist}
\def\!Rtrunc#1{\!trunc{#1}>\!Rresiduallength}
\def\!Ltrunc#1{\!trunc{#1}<\!Lresiduallength}
\def\!trunc#1#2#3{%          
  \!dimenA=\!dimenB         
  \advance\!dimenB by #1%
  \!dimenD=\!dimenB  \ifdim\!dimenD#2#3\!dimenD=#3\fi
  \!dimenC=\!dimenA  \ifdim\!dimenC#2#3\!dimenC=#3\fi
  \advance \!dimenD by -\!dimenC}
\def\!start (#1,#2){%
  \!plotxorigin=\!xorigin  \advance \!plotxorigin by \!plotsymbolxshift
  \!plotyorigin=\!yorigin  \advance \!plotyorigin by \!plotsymbolyshift
  \!xS=\!M{#1}\!xunit \!yS=\!M{#2}\!yunit
  \!rotateaboutpivot\!xS\!yS
  \!copylist\!UDlist\to\!!UDlist% **\!UDlist has the form \\{dimen1}\\{dimen2}..
%                                 ** Routine will draw dashed line with pen
%                                 ** down for dimen1, up for dimen2, ...
  \!getnextvalueof\!downlength\from\!!UDlist
  \!distacross=\!zpt%             ** 1st point goes at start of curve
  \!intervalno=0 %                ** initialize interval counter
  \global\totalarclength=\!zpt%   ** initialize distance traveled along curve
  \ignorespaces}
\def\!ljoin (#1,#2){%
  \advance\!intervalno by 1
  \!xE=\!M{#1}\!xunit \!yE=\!M{#2}\!yunit
  \!rotateaboutpivot\!xE\!yE
  \!xdiff=\!xE \advance \!xdiff by -\!xS%**  xdiff = xE - xS
  \!ydiff=\!yE \advance \!ydiff by -\!yS%**  ydiff = yE - yS
  \!Pythag\!xdiff\!ydiff\!arclength%     **  arclength = sqrt(xdiff**2+ydiff**2) 
  \global\advance \totalarclength by \!arclength%
  \!drawlinearsegment%   ** set by dashpat to \!linearsolid or \!lineardashed
  \!xS=\!xE \!yS=\!yE%   ** shift ending points to starting points
  \ignorespaces}
\def\!linearsolid{%
  \!npoints=\!arclength
  \!countA=\plotsymbolspacing
  \divide\!npoints by \!countA%      ** now #pts =. arclength/plotsymbolspacing
  \ifnum \!npoints<1 
    \!npoints=1 
  \fi
  \divide\!xdiff by \!npoints
  \divide\!ydiff by \!npoints
  \!xpos=\!xS \!ypos=\!yS
  \loop\ifnum\!npoints>-1
    \!plotifinbounds
    \advance \!xpos by \!xdiff
    \advance \!ypos by \!ydiff
    \advance \!npoints by -1
  \repeat
  \ignorespaces}
\def\!lineardashed{%
% **
  \ifdim\!distacross>\!arclength
    \advance \!distacross by -\!arclength  %nothing to plot in this interval
  \else
    \loop\ifdim\!distacross<\!arclength
%     ** plot point, interpolating linearly in x and y
      \!divide\!distacross\!arclength\!dimenA%  ** dimA = across/arclength
      \!removept\!dimenA\!t%  ** \!t holds value in dimA, without the "pt"
      \!xpos=\!t\!xdiff \advance \!xpos by \!xS
      \!ypos=\!t\!ydiff \advance \!ypos by \!yS
      \!plotifinbounds
      \advance\!distacross by \plotsymbolspacing
      \!advancedashing
    \repeat  
    \advance \!distacross by -\!arclength%    ** prepare for next interval 
  \fi
  \ignorespaces}
\def\!!advancedashing{%
  \advance\!downlength by -\plotsymbolspacing
  \ifdim \!downlength>\!zpt
  \else
    \advance\!distacross by \!downlength
    \!getnextvalueof\!uplength\from\!!UDlist
    \advance\!distacross by \!uplength
    \!getnextvalueof\!downlength\from\!!UDlist
  \fi}
\def\inboundscheckoff{%
  \def\!plotifinbounds{\!plot(\!xpos,\!ypos)}%
  \def\!initinboundscheck{\relax}\ignorespaces}
\def\!!plotifinbounds{%
  \ifdim \!xpos<\!checkleft
  \else
    \ifdim \!xpos>\!checkright
    \else
      \ifdim \!ypos<\!checkbot
      \else
         \ifdim \!ypos>\!checktop
         \else
           \!plot(\!xpos,\!ypos)
         \fi 
      \fi
    \fi
  \fi}
\def\!!initinboundscheck{%
  \!checkleft=\!arealloc     \advance\!checkleft by \!xorigin
  \!checkright=\!arearloc    \advance\!checkright by \!xorigin
  \!checkbot=\!areabloc      \advance\!checkbot by \!yorigin
  \!checktop=\!areatloc      \advance\!checktop by \!yorigin}
\def\!logten#1#2{%
  \expandafter\!!logten#1\!nil
  \!removept\!dimenF#2%
  \ignorespaces}
\def\!!logten#1#2\!nil{%
  \if -#1%
    \!dimenF=\!zpt
    \def\!next{\ignorespaces}%
  \else
    \if +#1%
      \def\!next{\!!logten#2\!nil}%
    \else
      \if .#1%
        \def\!next{\!!logten0.#2\!nil}%
      \else
        \def\!next{\!!!logten#1#2..\!nil}%
      \fi
    \fi
  \fi
  \!next}
\def\!!!logten#1#2.#3.#4\!nil{%
  \!dimenF=1pt %                 ** DimF holds log10 original argument
  \if 0#1%                      
    \!!logshift#3pt %            ** Argument < 1
  \else %                        ** Argument >= 1
    \!logshift#2/%               ** Shift decimal pt as many places
    \!dimenE=#1.#2#3pt %         **   as there are figures in #2
  \fi %                          ** Now dimE holds revised X want log10 of
  \ifdim \!dimenE<\!rootten%          ** Transform X to XX between sqrt(10) 
    \multiply \!dimenE 10 %           **   and 10*sqrt(10)
    \advance  \!dimenF -1pt
  \fi
  \!dimenG=\!dimenE%                  ** dimG <- (XX + 10)
    \advance\!dimenG 10pt
  \advance\!dimenE -10pt %            ** dimE <- (XX - 10)
  \multiply\!dimenE 10 %              ** dimE = 10*(XX-10)
  \!divide\!dimenE\!dimenG\!dimenE%   ** Now dimE=10t==10*(XX-10)/(XX+10)
  \!removept\!dimenE\!t%              ** !t=10t, with "pt" removed
  \!dimenG=\!t\!dimenE%               ** dimG=100t**2
  \!removept\!dimenG\!tt%             ** !tt=100t**2, with "pt" removed
  \!dimenH=\!tt\!tenAe%               ** dimH=10*a5*(10t)**2 /100
    \divide\!dimenH 100
  \advance\!dimenH \!tenAc%           ** ditto + 10*a3
  \!dimenH=\!tt\!dimenH%              ** ditto * (10t)**2 /100
    \divide\!dimenH 100   
  \advance\!dimenH \!tenAa%           ** ditto + 10*a1
  \!dimenH=\!t\!dimenH%               ** ditto * 10t / 100
    \divide\!dimenH 100 %             ** Now dimH = log10(XX) - 1
  \advance\!dimenF \!dimenH}%         ** dimF = log10(X)
\def\!logshift#1{%
  \if #1/%
    \def\!next{\ignorespaces}%
  \else
    \advance\!dimenF 1pt 
    \def\!next{\!logshift}%
  \fi 
  \!next}
 \def\!!logshift#1{%
   \advance\!dimenF -1pt
   \if 0#1%
     \def\!next{\!!logshift}%
   \else
     \if p#1%
       \!dimenF=1pt
       \def\!next{\!dimenE=1p}%
     \else
       \def\!next{\!dimenE=#1.}%
     \fi
   \fi
   \!next}
\def\beginpicture{%
  \setbox\!picbox=\hbox\bgroup%
  \!xleft=\maxdimen  
  \!xright=-\maxdimen
  \!ybot=\maxdimen
  \!ytop=-\maxdimen}
\def\endpicture{%
  \ifdim\!xleft=\maxdimen%  ** check if nothing was put in picbox
    \!xleft=\!zpt \!xright=\!zpt \!ybot=\!zpt \!ytop=\!zpt 
  \fi
  \global\!Xleft=\!xleft \global\!Xright=\!xright
  \global\!Ybot=\!ybot \global\!Ytop=\!ytop
  \egroup%
  \ht\!picbox=\!Ytop  \dp\!picbox=-\!Ybot
  \ifdim\!Ybot>\!zpt
  \else 
    \ifdim\!Ytop<\!zpt
      \!Ybot=\!Ytop
    \else
      \!Ybot=\!zpt
    \fi
  \fi
  \hbox{\kern-\!Xleft\lower\!Ybot\box\!picbox\kern\!Xright}}
\def\endpicturesave <#1,#2>{%
  \endpicture \global #1=\!Xleft \global #2=\!Ybot \ignorespaces}
\def\setcoordinatesystem{%
  \!ifnextchar{u}{\!getlengths }
    {\!getlengths units <\!xunit,\!yunit>}}
\def\!getlengths units <#1,#2>{%
  \!xunit=#1\relax
  \!yunit=#2\relax
  \!ifcoordmode 
    \let\!SCnext=\!SCccheckforRP
  \else
    \let\!SCnext=\!SCdcheckforRP
  \fi
  \!SCnext}
\def\!SCccheckforRP{%
  \!ifnextchar{p}{\!cgetreference }
    {\!cgetreference point at {\!xref} {\!yref} }}
\def\!cgetreference point at #1 #2 {%
  \edef\!xref{#1}\edef\!yref{#2}%
  \!xorigin=\!xref\!xunit  \!yorigin=\!yref\!yunit  
  \!initinboundscheck % ** See linear.tex
  \ignorespaces}
\def\!SCdcheckforRP{%
  \!ifnextchar{p}{\!dgetreference}%
    {\ignorespaces}}
\def\!dgetreference point at #1 #2 {%
  \!xorigin=#1\relax  \!yorigin=#2\relax
  \ignorespaces}
\long\def\put#1#2 at #3 #4 {%
  \!setputobject{#1}{#2}%
  \!xpos=\!M{#3}\!xunit  \!ypos=\!M{#4}\!yunit  
  \!rotateaboutpivot\!xpos\!ypos%
  \advance\!xpos -\!xorigin  \advance\!xpos -\!xshift
  \advance\!ypos -\!yorigin  \advance\!ypos -\!yshift
  \kern\!xpos\raise\!ypos\box\!putobject\kern-\!xpos%
  \!doaccounting\ignorespaces}
\long\def\multiput #1#2 at {%
  \!setputobject{#1}{#2}%
  \!ifnextchar"{\!putfromfile}{\!multiput}}
\def\!putfromfile"#1"{%
  \expandafter\!multiput \input #1 /}
\def\!multiput{%
  \futurelet\!nextchar\!!multiput}
\def\!!multiput{%
  \if *\!nextchar
    \def\!nextput{\!alsoby}%
  \else
    \if /\!nextchar
      \def\!nextput{\!finishmultiput}%
    \else
      \def\!nextput{\!alsoat}%
    \fi
  \fi
  \!nextput}
\def\!finishmultiput/{%
  \setbox\!putobject=\hbox{}%
  \ignorespaces}
\def\!alsoat#1 #2 {%
  \!xpos=\!M{#1}\!xunit  \!ypos=\!M{#2}\!yunit  
  \!rotateaboutpivot\!xpos\!ypos%
  \advance\!xpos -\!xorigin  \advance\!xpos -\!xshift
  \advance\!ypos -\!yorigin  \advance\!ypos -\!yshift
  \kern\!xpos\raise\!ypos\copy\!putobject\kern-\!xpos%
  \!doaccounting
  \!multiput}
\def\!alsoby*#1 #2 #3 {%
  \!dxpos=\!M{#2}\!xunit \!dypos=\!M{#3}\!yunit 
  \!rotateonly\!dxpos\!dypos
  \!ntemp=#1%
  \!!loop\ifnum\!ntemp>0
    \advance\!xpos by \!dxpos  \advance\!ypos by \!dypos
    \kern\!xpos\raise\!ypos\copy\!putobject\kern-\!xpos%
    \advance\!ntemp by -1
  \repeat
  \!doaccounting 
  \!multiput}
\def\accountingon{\def\!doaccounting{\!!doaccounting}\ignorespaces}
\def\!!doaccounting{%
  \!xtemp=\!xpos  
  \!ytemp=\!ypos
  \ifdim\!xtemp<\!xleft 
     \!xleft=\!xtemp 
  \fi
  \advance\!xtemp by  \!wd 
  \ifdim\!xright<\!xtemp 
    \!xright=\!xtemp
  \fi
  \advance\!ytemp by -\!dp
  \ifdim\!ytemp<\!ybot  
    \!ybot=\!ytemp
  \fi
  \advance\!ytemp by  \!dp
  \advance\!ytemp by  \!ht 
  \ifdim\!ytemp>\!ytop  
    \!ytop=\!ytemp  
  \fi}
\long\def\!setputobject#1#2{%
  \setbox\!putobject=\hbox{#1}%
  \!ht=\ht\!putobject  \!dp=\dp\!putobject  \!wd=\wd\!putobject
  \wd\!putobject=\!zpt
  \!xshift=.5\!wd   \!yshift=.5\!ht   \advance\!yshift by -.5\!dp
  \edef\!putorientation{#2}%
  \expandafter\!SPOreadA\!putorientation[]\!nil%
  \expandafter\!SPOreadB\!putorientation<\!zpt,\!zpt>\!nil\ignorespaces}
\def\!SPOreadA#1[#2]#3\!nil{\!etfor\!orientation:=#2\do\!SPOreviseshift}
\def\!SPOreadB#1<#2,#3>#4\!nil{\advance\!xshift by -#2\advance\!yshift by -#3}
\def\!SPOreviseshift{%
  \if l\!orientation 
    \!xshift=\!zpt
  \else 
    \if r\!orientation 
      \!xshift=\!wd
    \else 
      \if b\!orientation
        \!yshift=-\!dp
      \else 
        \if B\!orientation 
          \!yshift=\!zpt
        \else 
          \if t\!orientation 
            \!yshift=\!ht
          \fi 
        \fi
      \fi
    \fi
  \fi}
\long\def\!dimenput#1#2(#3,#4){%
  \!setputobject{#1}{#2}%
  \!xpos=#3\advance\!xpos by -\!xshift
  \!ypos=#4\advance\!ypos by -\!yshift
  \kern\!xpos\raise\!ypos\box\!putobject\kern-\!xpos%
  \!doaccounting\ignorespaces}
\def\!setdimenmode{%
  \let\!M=\!M!!\ignorespaces}
\def\!setcoordmode{%
  \let\!M=\!M!\ignorespaces}
\def\!ifcoordmode{%
  \ifx \!M \!M!}
\def\!ifdimenmode{%
  \ifx \!M \!M!!}
\def\!M!#1#2{#1#2} 
\def\!M!!#1#2{#1}
\let\setdimensionmode=\!setdimenmode
\let\setcoordinatemode=\!setcoordmode
\def\!stack[#1]{%
  \let\!lglue=\hfill \let\!rglue=\hfill
  \expandafter\let\csname !#1glue\endcsname=\relax
  \!ifnextchar<{\!!stack}{\!!stack<\stackleading>}}
\def\!!stack<#1>#2{%
  \vbox{\def\!valueslist{}\!ecfor\!value:=#2\do{%
    \expandafter\!rightappend\!value\withCS{\\}\to\!valueslist}%
    \!lop\!valueslist\to\!value
    \let\\=\cr\lineskiplimit=\maxdimen\lineskip=#1%
    \baselineskip=-1000pt\halign{\!lglue##\!rglue\cr \!value\!valueslist\cr}}%
  \ignorespaces}
\def\!lines[#1]#2{%
  \let\!lglue=\hfill \let\!rglue=\hfill
  \expandafter\let\csname !#1glue\endcsname=\relax
  \vbox{\halign{\!lglue##\!rglue\cr #2\crcr}}%
  \ignorespaces}
\def\!Lines[#1]#2{%
  \let\!lglue=\hfill \let\!rglue=\hfill
  \expandafter\let\csname !#1glue\endcsname=\relax
  \vtop{\halign{\!lglue##\!rglue\cr #2\crcr}}%
  \ignorespaces}
\def\setplotsymbol(#1#2){%
  \!setputobject{#1}{#2}
  \setbox\!plotsymbol=\box\!putobject%
  \!plotsymbolxshift=\!xshift 
  \!plotsymbolyshift=\!yshift 
  \ignorespaces}
\def\!!plot(#1,#2){%
  \!dimenA=-\!plotxorigin \advance \!dimenA by #1%    ** over
  \!dimenB=-\!plotyorigin \advance \!dimenB by #2%    ** up
  \kern\!dimenA\raise\!dimenB\copy\!plotsymbol\kern-\!dimenA%
  \ignorespaces}
\def\!!!plot(#1,#2){%
  \!dimenA=-\!plotxorigin \advance \!dimenA by #1%    ** over
  \!dimenB=-\!plotyorigin \advance \!dimenB by #2%    ** up
  \kern\!dimenA\raise\!dimenB\copy\!plotsymbol\kern-\!dimenA%
  \!countE=\!dimenA
  \!countF=\!dimenB
  \immediate\write\!replotfile{\the\!countE,\the\!countF.}%
  \ignorespaces}
\def\savelinesandcurves on "#1" {%
  \immediate\closeout\!replotfile
  \immediate\openout\!replotfile=#1%
  \let\!plot=\!!!plot}
\def\dontsavelinesandcurves {%
  \let\!plot=\!!plot}
\xdef\!Commentsignal{%}}
\def\writesavefile#1 {%
  \immediate\write\!replotfile{\!Commentsignal #1}%
  \ignorespaces}

% ** \replot "FILE_NAME"
% **   Replots the locations saved earlier under \savelinesandcurves
% **   on "FILE_NAME"
% ** See Subsection 5.6 of the manual.
\def\replot"#1" {%
  \expandafter\!replot\input #1 /}
\def\!replot#1,#2. {%
  \!dimenA=#1sp
  \kern\!dimenA\raise#2sp\copy\!plotsymbol\kern-\!dimenA
  \futurelet\!nextchar\!!replot}
\def\!!replot{%
  \if /\!nextchar 
    \def\!next{\!finish}%
  \else
    \def\!next{\!replot}%
  \fi
  \!next}
% **************************************************
% ***  PYTHAGORAS  (Euclidean distance function) ***
% **************************************************

% ** User command:
% **   \placehypotenuse for <dimension1> and <dimension2> in <register> 

% ** Internal command:
% **   \!Pythag{X}{Y}{Z}
% **     Input X,Y are dimensions, or dimension registers.
% **     Output Z == sqrt(X**2+Y**2) must be a dimension register.
% **     Assumes that |X|+|Y| < 2048pt (about 28in).
 
% ** Without loss of generality, suppose  x>0, y>0.  Put s = x+y,
% **   z = sqrt(x**2+y**2). Then  z = s*f,  where  f = sqrt(t**2 + (1-t)**2)
% **   = sqrt((1+tau**2)/2), where  t = x/s  and  tau = 2(t-1/2) .
 
% ** Uses the \!divide macro (which uses registers \!dimenA--\!dimenD.
% ** Uses the \!removept macro   (e.g., 123.45pt --> 123.45)
% ** Uses registers \!dimenE--\!dimenI.
\def\!Pythag#1#2#3{%
  \!dimenE=#1\relax                                     
  \ifdim\!dimenE<\!zpt 
    \!dimenE=-\!dimenE 
  \fi%                                            ** dimE = |x|
  \!dimenF=#2\relax
  \ifdim\!dimenF<\!zpt 
    \!dimenF=-\!dimenF 
  \fi%                                            ** dimF = |y|
  \advance \!dimenF by \!dimenE%                  ** dimF = s = |x|+|y|
  \ifdim\!dimenF=\!zpt 
    \!dimenG=\!zpt%                               ** dimG = z = sqrt(x**2+y**2)
  \else 
    \!divide{8\!dimenE}\!dimenF\!dimenE%          ** now dimE = 8t = (8|x|)/s
    \advance\!dimenE by -4pt%                     ** 8tau = (8t-4)*2
      \!dimenE=2\!dimenE%                         **   (tau = 2*t - 1)
    \!removept\!dimenE\!!t%                       ** 8tau, without "pt"
    \!dimenE=\!!t\!dimenE%                        ** (8tau)**2, in pts
    \advance\!dimenE by 64pt%                     ** u = [64 + (8tau)**2]/2
    \divide \!dimenE by 2%                        **   [u = (8f)**2]
    \!dimenH=7pt%                                 ** initial guess g at sqrt(u)
    \!!Pythag\!!Pythag\!!Pythag%                  ** 3 iterations give sqrt(u)
    \!removept\!dimenH\!!t%                       ** 8f=sqrt(u), without "pt"
    \!dimenG=\!!t\!dimenF%                        ** z = (8f)*s/8
    \divide\!dimenG by 8
  \fi
  #3=\!dimenG
  \ignorespaces}

\def\!!Pythag{%                                   ** Newton-Raphson for sqrt
  \!divide\!dimenE\!dimenH\!dimenI%               ** v = u/g
  \advance\!dimenH by \!dimenI%                   ** g <-- (g + u/g)/2
    \divide\!dimenH by 2}

% **  \placehypotenuse for <XI> and <ETA> in <ZETA>
% **  See Subsection 9.3 of the manual.
\def\placehypotenuse for <#1> and <#2> in <#3> {%
  \!Pythag{#1}{#2}{#3}}

% **********************************************
% *** QUADRATIC ARC  (Draws a quadratic arc) ***
% **********************************************
 
% **  Internal command
% **    \!qjoin (XCOORD1,YCOORD1) (XCOORD2,YCOORD2)
 
% **  \!qjoin (XCOORD1,YCOORD1) (XCOORD2,YCOORD2)
% **  Draws an arc starting at the (last) point specified by the most recent
% **  \!qjoin, or \!ljoin, or \!start  and passing through (X_1,Y_1), (X_2,Y_2).
% **  Uses quadratic interpolation in both  x  and  y:  
% **    x(t), 0 <= t <= 1, interpolates  x_0, x_1, x_2  at  t=0, .5, 1
% **    y(t), 0 <= t <= 1, interpolates  y_0, y_1, y_2  at  t=0, .5, 1
 
\def\!qjoin (#1,#2) (#3,#4){%
  \advance\!intervalno by 1
  \!ifcoordmode
    \edef\!xmidpt{#1}\edef\!ymidpt{#2}%
  \else
    \!dimenA=#1\relax \edef\!xmidpt{\the\!dimenA}%
    \!dimenA=#2\relax \edef\!ymidpt{\the\!dimenA}%
  \fi
  \!xM=\!M{#1}\!xunit  \!yM=\!M{#2}\!yunit   \!rotateaboutpivot\!xM\!yM
  \!xE=\!M{#3}\!xunit  \!yE=\!M{#4}\!yunit   \!rotateaboutpivot\!xE\!yE
%
% ** Find coefficients for x(t)=a_x + b_x*t + c_x*t**2
  \!dimenA=\!xM  \advance \!dimenA by -\!xS%   ** dimA = I = xM - xS
  \!dimenB=\!xE  \advance \!dimenB by -\!xM%   ** dimB = II = xE-xM
  \!xB=3\!dimenA \advance \!xB by -\!dimenB%   ** b=3I-II
  \!xC=2\!dimenB \advance \!xC by -2\!dimenA%  ** c=2(II-I)
%
% ** Find coefficients for y(t)=y_x + b_y*t + c_y*t**2
  \!dimenA=\!yM  \advance \!dimenA by -\!yS%   
  \!dimenB=\!yE  \advance \!dimenB by -\!yM%  
  \!yB=3\!dimenA \advance \!yB by -\!dimenB%  
  \!yC=2\!dimenB \advance \!yC by -2\!dimenA% 
%
% ** Use Simpson's rule to calculate arc length over [0,1/2]:
% **   arc length = 1/2[1/6 f(0) + 4/6 f(1/4) + 1/6 f(1/2)]
% ** with f(t) = sqrt(x'(t)**2 + y'(t)**2).
  \!xprime=\!xB  \!yprime=\!yB%          ** x'(t) = b + 2ct
  \!dxprime=.5\!xC  \!dyprime=.5\!yC%    ** dt=1/4 ==> dx'(t) = c/2
  \!getf \!midarclength=\!dimenA
  \!getf \advance \!midarclength by 4\!dimenA
  \!getf \advance \!midarclength by \!dimenA
  \divide \!midarclength by 12
%
% ** Get arc length over [0,1].
  \!arclength=\!dimenA
  \!getf \advance \!arclength by 4\!dimenA
  \!getf \advance \!arclength by \!dimenA
  \divide \!arclength by 12%             ** Now have arc length over [1/2,1]
  \advance \!arclength by \!midarclength
  \global\advance \totalarclength by \!arclength
%
%
% ** Check to see if there's anything to plot in this interval
  \ifdim\!distacross>\!arclength 
    \advance \!distacross by -\!arclength%   ** nothing 
  \else
    \!initinverseinterp%  ** initialize for inverse interpolation on arc length
    \loop\ifdim\!distacross<\!arclength%     ** loop over points on arc 
      \!inverseinterp%    ** find  t  such that arc length[0,t] = distacross,
%                         **   using inverse quadratic interpolation
%                         ** now evaluate x(t)=(c*t + b)*t + a
      \!xpos=\!t\!xC \advance\!xpos by \!xB
        \!xpos=\!t\!xpos \advance \!xpos by \!xS
%                                             ** evaluate y(t)
      \!ypos=\!t\!yC \advance\!ypos by \!yB
        \!ypos=\!t\!ypos \advance \!ypos by \!yS
      \!plotifinbounds%                       ** plot point if in bounds
      \advance\!distacross \plotsymbolspacing%** advance arc length for next pt
      \!advancedashing%                       ** see "linear"
    \repeat  
    \advance \!distacross by -\!arclength%    ** prepare for next interval 
  \fi
  \!xS=\!xE%              ** shift ending points to starting points
  \!yS=\!yE
  \ignorespaces}

% ** \!getf -- Calculates sqrt(x'(t)**2 + y'(t)**2) and advances
% **   x'(t) and y'(t)
\def\!getf{\!Pythag\!xprime\!yprime\!dimenA%
  \advance\!xprime by \!dxprime
  \advance\!yprime by \!dyprime}

% ** \!initinverseinterp -- initializes for inverse quadratic interpolation
% ** of arc length provided  1/3 < midarclength/arclength < 2/3; otherwise
% ** initializes for inverse linear interpolation.
\def\!initinverseinterp{%
  \ifdim\!arclength>\!zpt
    \!divide{8\!midarclength}\!arclength\!dimenE% ** dimE=8w=8r/s, where  r 
%                                               **  = midarclength, s=arclength
% **  Test for  w  out of range:  w<1/3  or w>2/3
    \ifdim\!dimenE<\!wmin \!setinverselinear
    \else 
      \ifdim\!dimenE>\!wmax \!setinverselinear
      \else%                                    ** w  in range: initialize
        \def\!inverseinterp{\!inversequad}\ignorespaces
%
% **     Calculate the coefficients  \!beta  and  \!gamma  of the quadratic
% **                    t = \!beta*v + \!gamma*v**2
% **     taking the values  t=0, 1/2, 1  at  v=0, w==r/s, 1  respectively:
% **        \!beta = (1/2 - w**2)/[w(1-w)] 
% **        \!gamma = 1 - beta.
%
         \!removept\!dimenE\!Ew%           **  8w, without "pt"
         \!dimenF=-\!Ew\!dimenE%           **  -(8w)**2
         \advance\!dimenF by 32pt%         **  32 - (8w)**2
         \!dimenG=8pt 
         \advance\!dimenG by -\!dimenE%    **  8 - 8w
         \!dimenG=\!Ew\!dimenG%            **  (8w)*(8-8w)
         \!divide\!dimenF\!dimenG\!beta%   **  beta = (32-(8w)**2)/(8w(8-8w))
%                                          **       = (1/2 - w**2)/(w(1-w))
         \!gamma=1pt
         \advance \!gamma by -\!beta%      **  gamma = 1-beta
      \fi%       ** end of the \ifdim\!dimenE>\!wmax
    \fi%         ** end of the \ifdim\!dimenE<\!wmin
  \fi%           ** end of the \ifdim\!arclength>\!zpt
  \ignorespaces}

% ** For 0 <= t <= 1, let AL(t) = arclength[0,t]/arclength[0,1]; note
% ** AL(0)=0, AL(1/2)=midarclength/arclength, AL(1)=1.  This routine
% ** calculates an approximation to AL^{-1}(distance across/arclength),
% ** using the assumption that AL^{-1} is quadratic.  Specifically, 
% ** it finds  t  such that
% **    AL^{-1}(v) =. t = v*(\!beta + \!gamma*v)
% ** where  \!beta  and  \!gamma  are set by \!initinv, and where
% ** v=distance across/arclength
\def\!inversequad{%
  \!divide\!distacross\!arclength\!dimenG%   ** dimG = v = distacross/arclength
  \!removept\!dimenG\!v%                     ** v, without "pt"
  \!dimenG=\!v\!gamma%                       ** gamma*v
  \advance\!dimenG by \!beta%                ** beta + gamma*v
  \!dimenG=\!v\!dimenG%                      ** t = v*(beta + gamma*v)
  \!removept\!dimenG\!t}%                    ** t, without "pt"

% ** When  w <= 1/3  or  w >= 2/3, the following routine writes (using
% ** plain TEK's \wlog command) a warning message on the user's log file,
% ** and initializes for inverse linear interpolation on arc length.
\def\!setinverselinear{%
  \def\!inverseinterp{\!inverselinear}%
  \divide\!dimenE by 8 \!removept\!dimenE\!t
  \!countC=\!intervalno \multiply \!countC 2
  \!countB=\!countC     \advance \!countB -1
  \!countA=\!countB     \advance \!countA -1
  \wlog{\the\!countB th point (\!xmidpt,\!ymidpt) being plotted 
    doesn't lie in the}%
  \wlog{ middle third of the arc between the \the\!countA th 
    and \the\!countC th points:}%
  \wlog{ [arc length \the\!countA\space to \the\!countB]/[arc length 
    \the \!countA\space to \the\!countC]=\!t.}%
  \ignorespaces}
 
% **  Inverse linear interpolation
\def\!inverselinear{% 
  \!divide\!distacross\!arclength\!dimenG
  \!removept\!dimenG\!t}

% **************************************
% **  ROTATIONS  (Handles rotations) ***
% **************************************
 
% ** User commands
% **   \startrotation [by COS_OF_ANGLE SIN_OF_ANGLE] [about XPIVOT YPIVOT]
% **   \stoprotation

% **   \startrotation [by COS_OF_ANGLE SIN_OF_ANGLE] [about XPIVOT YPIVOT]
% ** Future (XCOORD,YCOORD)'s will be rotated about (XPIVOT,YPIVOT) 
% ** by the angle with the give COS and SIN. Both fields are optional.
% ** [COS,SIN] defaults to previous value, or (1,0).
% ** (XPIVOT,YPIVOT) defaults to previous value, or (0,0)
% ** You can't change the coordinate system in the scope of a rotation.
% ** See Subsection 9.1 of the manual.
\def\startrotation{%
  \let\!rotateaboutpivot=\!!rotateaboutpivot
  \let\!rotateonly=\!!rotateonly
  \!ifnextchar{b}{\!getsincos }%
    {\!getsincos by {\!cosrotationangle} {\!sinrotationangle} }}
\def\!getsincos by #1 #2 {%
  \edef\!cosrotationangle{#1}%
  \edef\!sinrotationangle{#2}%
  \!ifcoordmode 
    \let\!ROnext=\!ccheckforpivot
  \else
    \let\!ROnext=\!dcheckforpivot
  \fi
  \!ROnext}
\def\!ccheckforpivot{%
  \!ifnextchar{a}{\!cgetpivot}%
    {\!cgetpivot about {\!xpivotcoord} {\!ypivotcoord} }}
\def\!cgetpivot about #1 #2 {%
  \edef\!xpivotcoord{#1}%
  \edef\!ypivotcoord{#2}%
  \!xpivot=#1\!xunit  \!ypivot=#2\!yunit
  \ignorespaces}
\def\!dcheckforpivot{%
  \!ifnextchar{a}{\!dgetpivot}{\ignorespaces}}
\def\!dgetpivot about #1 #2 {%
  \!xpivot=#1\relax  \!ypivot=#2\relax
  \ignorespaces}

% ** Following terminates rotation.
% ** See Subsection 9.1 of the manual.
\def\stoprotation{%
  \let\!rotateaboutpivot=\!!!rotateaboutpivot
  \let\!rotateonly=\!!!rotateonly
  \ignorespaces}
 
% ** !!rotateaboutpivot{XREG}{YREG}
% ** XREG <-- xpvt + cos(angle)*(XREG-xpvt) - sin(angle)*(YREG-ypvt)
% ** YREG <-- ypvt + cos(angle)*(YREG-ypvt) + sin(angle)*(XREG-xpvt)
% ** XREG,YREG are dimension registers. Can't be \!dimenA to \!dimenD
\def\!!rotateaboutpivot#1#2{%
  \!dimenA=#1\relax  \advance\!dimenA -\!xpivot
  \!dimenB=#2\relax  \advance\!dimenB -\!ypivot
  \!dimenC=\!cosrotationangle\!dimenA
    \advance \!dimenC -\!sinrotationangle\!dimenB
  \!dimenD=\!cosrotationangle\!dimenB
    \advance \!dimenD  \!sinrotationangle\!dimenA
  \advance\!dimenC \!xpivot  \advance\!dimenD \!ypivot
  #1=\!dimenC  #2=\!dimenD
  \ignorespaces}

% ** \!!rotateonly{XREG}{YREG}
% ** Like \!!rotateaboutpivot, but with a pivot of  (0,0)
\def\!!rotateonly#1#2{%
  \!dimenA=#1\relax  \!dimenB=#2\relax 
  \!dimenC=\!cosrotationangle\!dimenA
    \advance \!dimenC -\!rotsign\!sinrotationangle\!dimenB
  \!dimenD=\!cosrotationangle\!dimenB
    \advance \!dimenD  \!rotsign\!sinrotationangle\!dimenA
  #1=\!dimenC  #2=\!dimenD
  \ignorespaces}
\def\!rotsign{}
\def\!!!rotateaboutpivot#1#2{\relax}
\def\!!!rotateonly#1#2{\relax}
\stoprotation

\def\!reverserotateonly#1#2{%
  \def\!rotsign{-}%
  \!rotateonly{#1}{#2}%
  \def\!rotsign{}%
  \ignorespaces}

\def\!getspan span <#1>{%
  \!dshade=#1\relax
  \!ifcoordmode 
    \let\!GRnext=\!GRccheckforAP
  \else
    \let\!GRnext=\!GRdcheckforAP
  \fi
  \!GRnext}
\def\!GRccheckforAP{%
  \!ifnextchar{p}{\!cgetanchor }
    {\!cgetanchor point at {\!xshadesave} {\!yshadesave} }}
\def\!cgetanchor point at #1 #2 {%
  \edef\!xshadesave{#1}\edef\!yshadesave{#2}%
  \!xshade=\!xshadesave\!xunit  \!yshade=\!yshadesave\!yunit
  \ignorespaces}
\def\!GRdcheckforAP{%
  \!ifnextchar{p}{\!dgetanchor}%
    {\ignorespaces}}
\def\!dgetanchor point at #1 #2 {%
  \!xshade=#1\relax  \!yshade=#2\relax
  \ignorespaces}

% **  \setshadesymbol  [<LS, RS, BS, TS>] ({SHADESYMBOL}
% **    <XDIMEN,YDIMEN> [ORIENTATION])
% **  Saves SHADESYMBOL away in an hbox for use with shading routines.
% **  A shade symbol will not be plotted if its plot position comes within
% **    distance LS of the left boundary,  RS of the right boundary,  TS of the
% **    top boundary,  BS of the bottom boundary.  These parameters have 
% **    default values that should work in most cases (see below).
% **    To override a default value, specify the replacement value
% **    in the appropriate subfield of the shrinkages field.
% **    0pt may be coded as  "z" (without the quotes).  To accept a
% **    default value, leave the field empty.  Thus
% **      [,z,,5pt]  sets  LS=default, RS=0pt, BS=default, TS=5pt .
% **    Skipping the shrinkages field accepts all the defaults.
% **  See Subsection 7.1 of the manual.
\def\setshadesymbol{%
  \!ifnextchar<{\!setshadesymbol}{\!setshadesymbol<,,,> }}

\def\!setshadesymbol <#1,#2,#3,#4> (#5#6){%
% **  set the shadesymbol
  \!setputobject{#5}{#6}%                        
  \setbox\!shadesymbol=\box\!putobject%
  \!shadesymbolxshift=\!xshift \!shadesymbolyshift=\!yshift
%
% **  set the shrinkages
  \!dimenA=\!xshift \advance\!dimenA \!smidge% ** default LS = xshift - smidge
  \!override\!dimenA{#1}\!lshrinkage%         
  \!dimenA=\!wd \advance \!dimenA -\!xshift%   ** default RS = width - xshift
    \advance\!dimenA \!smidge%                                  - smidge
    \!override\!dimenA{#2}\!rshrinkage
  \!dimenA=\!dp \advance \!dimenA \!yshift%    ** default BS = depth + yshift
    \advance\!dimenA \!smidge%                                  - smidge
    \!override\!dimenA{#3}\!bshrinkage
  \!dimenA=\!ht \advance \!dimenA -\!yshift%   ** default TS = height - yshift
    \advance\!dimenA \!smidge%                                  - smidge
    \!override\!dimenA{#4}\!tshrinkage
  \ignorespaces}
\def\!smidge{-.2pt}%

% ** \!override{NOMINAL DIMEN}{REPLACEMENT DIMEN}{DIMEN}
% ** Overrides the NOMINAL DIMEN by the REPLACEMENT DIMEN to produce DIMEN,
% ** according to the following rules:
% **   REPLACEMENT DIMEN empty: DIMEN <-- NOMINAL DIMEN
% **   REPLACEMENT DIMEN z:     DIMEN <-- 0pt
% **   otherwise:               DIMEN <-- REPLACEMENT DIMEN
% ** DIMEN must be a dimension register
\def\!override#1#2#3{%
  \edef\!!override{#2}% 
  \ifx \!!override\empty
    #3=#1\relax
  \else
    \if z\!!override
      #3=\!zpt
    \else
      \ifx \!!override\!blankz
        #3=\!zpt
      \else
        #3=#2\relax
      \fi
    \fi
  \fi
  \ignorespaces}
\def\!blankz{ z}

\setshadesymbol ({\fiverm .})%       ** initialize plotsymbol
%                                    ** \fivesy ^^B  is a small cross

% ** \!startvshade [at] (xS,ybS,ytS)
% ** Initiates vertical shading mode
\def\!startvshade#1(#2,#3,#4){%
  \let\!!xunit=\!xunit%
  \let\!!yunit=\!yunit%
  \let\!!xshade=\!xshade%
  \let\!!yshade=\!yshade%
  \def\!getshrinkages{\!vgetshrinkages}%
  \let\!setshadelocation=\!vsetshadelocation%
  \!xS=\!M{#2}\!!xunit
  \!ybS=\!M{#3}\!!yunit
  \!ytS=\!M{#4}\!!yunit
  \!shadexorigin=\!xorigin  \advance \!shadexorigin \!shadesymbolxshift
  \!shadeyorigin=\!yorigin  \advance \!shadeyorigin \!shadesymbolyshift
  \ignorespaces}
 
% ** \!starthshade [at] (yS,xlS,xrS)
% ** Initiates horizontal shading mode
\def\!starthshade#1(#2,#3,#4){%
  \let\!!xunit=\!yunit%
  \let\!!yunit=\!xunit%
  \let\!!xshade=\!yshade%
  \let\!!yshade=\!xshade%
  \def\!getshrinkages{\!hgetshrinkages}%
  \let\!setshadelocation=\!hsetshadelocation%
  \!xS=\!M{#2}\!!xunit
  \!ybS=\!M{#3}\!!yunit
  \!ytS=\!M{#4}\!!yunit
  \!shadexorigin=\!xorigin  \advance \!shadexorigin \!shadesymbolxshift
  \!shadeyorigin=\!yorigin  \advance \!shadeyorigin \!shadesymbolyshift
  \ignorespaces}

% **  \!lattice{ANCHOR}{SPAN}{LOCATION}{INDEX}{LATTICE LOCATION}
% **  Consider the lattice with points  ANCHOR + j*SPAN. This routine determines
% **  the index  k  of the smallest lattice point >= LOCATION, and sets
% **  LATTICE LOCATION = ANCHOR + k*SPAN.
% **  INDEX is assumed to be a count register, LATTICE LOCATION a dimen reg.
\def\!lattice#1#2#3#4#5{%
  \!dimenA=#1%                        ** dimA = ANCHOR
  \!dimenB=#2%                        ** dimB = SPAN  (assumed > 0pt)
  \!countB=\!dimenB%                  ** ctB  = SPAN, as a count
%
% ** Determine index of smallest lattice point >= LOCATION
  \!dimenC=#3%                        ** dimC = LOCATION
  \advance\!dimenC -\!dimenA%         ** now dimC = LOCATION-ANCHOR
  \!countA=\!dimenC%                  ** ctA = above, as a count
  \divide\!countA \!countB%           ** now ctA = desired index, if dimC <= 0
  \ifdim\!dimenC>\!zpt
    \!dimenD=\!countA\!dimenB%        ** (tentative k)*span
    \ifdim\!dimenD<\!dimenC%          ** if this is false, ctA = desired index
      \advance\!countA 1 %            ** if true, have to add 1
    \fi
  \fi
  \!dimenC=\!countA\!dimenB%          ** lattice location = anchor + ctA*span
    \advance\!dimenC \!dimenA
  #4=\!countA%                        ** the desired index
  #5=\!dimenC%                        ** corresponding lattice location
  \ignorespaces}

% ** \!qshade [with shrinkages] [[LS,RS,BS,TS]]
% ***** during vertical shading:
% **    [the region from (xS,ybS,ytS) to] (xM,ybM,ytM) [and] (xE,ybE,ytE)
% ** Shades the region {(x,y): xS <= x <= xE, yb(x) <= y <= yt(x)}, where 
% **   yb is the quadratic thru (xS,ybS) & (xM,ybM) & (xE,ybE)
% **   yt is the quadratic thru (xS,ytS) & (xM,ybM) & (xE,ytE)
% ** xS,ybS,ytS are either given by \!startvshade or carried over
% **   as the ending values of the immediately preceding \!qshade.
% ** For the interpretation of LS, RS, BS, & TS, see \setshadesymbol. The
% **   values set there can be overridden, for the course of this \!qshade
% **   only, in the same manner as overrides are specified for
% **   \setshadesymbol.
% ***** during horizontal shading:
% **    [the region from (yS,xlS,xrS) to] (yM,xlM,xrM) [and] (yE,xlE,xrE)
\def\!qshade#1(#2,#3,#4)#5(#6,#7,#8){%
  \!xM=\!M{#2}\!!xunit
  \!ybM=\!M{#3}\!!yunit
  \!ytM=\!M{#4}\!!yunit
  \!xE=\!M{#6}\!!xunit
  \!ybE=\!M{#7}\!!yunit
  \!ytE=\!M{#8}\!!yunit
  \!getcoeffs\!xS\!ybS\!xM\!ybM\!xE\!ybE\!ybB\!ybC%**Get coefficients B & C for
  \!getcoeffs\!xS\!ytS\!xM\!ytM\!xE\!ytE\!ytB\!ytC%**y=y0 + B(x-X0) + C(x-X0)**2
  \def\!getylimits{\!qgetylimits}%
  \!shade{#1}\ignorespaces}
 
% ** \!lshade ... (xE,ybE,ytE)
% ** This is like \!qshade, but the top and bottom boundaries are linear,
% ** rather than quadratic.
\def\!lshade#1(#2,#3,#4){%
  \!xE=\!M{#2}\!!xunit
  \!ybE=\!M{#3}\!!yunit
  \!ytE=\!M{#4}\!!yunit
  \!dimenE=\!xE  \advance \!dimenE -\!xS%   ** xE-xS
  \!dimenC=\!ytE \advance \!dimenC -\!ytS%  ** ytE-ytS
  \!divide\!dimenC\!dimenE\!ytB%            ** ytB = (ytE-ytS)/(xE-xS)
  \!dimenC=\!ybE \advance \!dimenC -\!ybS%  ** ybE-ybS
  \!divide\!dimenC\!dimenE\!ybB%            ** ybB = (ybE-ybS)/(xE-xS)
  \def\!getylimits{\!lgetylimits}%
  \!shade{#1}\ignorespaces}
 
% **  \!getcoeffs{X0}{Y0}{X1}{Y1}{X2}{Y2}{B}{C}
% **  Finds  B  and  C  such that the quadratic  y = Y0 + B(x-X0) + C(x-X0)**2
% **  passes through (X1,Y1) and (X2,Y2):  when X0=0=Y0, the formulas are:
% **                   B = S1 - X1*C,   C = (S2-S1)/X2
% **  with
% **                 S1 = Y1/X1,   S2 = (Y2-Y1)/(X2-X1).
\def\!getcoeffs#1#2#3#4#5#6#7#8{% 
  \!dimenC=#4\advance \!dimenC -#2%            ** dimC=Y1-Y0
  \!dimenE=#3\advance \!dimenE -#1%            ** dimE=X1-X0
  \!divide\!dimenC\!dimenE\!dimenF%            ** dimF=S1
  \!dimenC=#6\advance \!dimenC -#4%            ** dimC=Y2-Y1
  \!dimenH=#5\advance \!dimenH -#3%            ** dimH=X2-X1
  \!divide\!dimenC\!dimenH\!dimenG%            ** dimG=S2
  \advance\!dimenG -\!dimenF%                  ** dimG=S2-S1
  \advance \!dimenH \!dimenE%                  ** dimH=X2-X0
  \!divide\!dimenG\!dimenH#8%                  ** C=(S2-S1)/(X2-X0)
  \!removept#8\!t%                             ** C, without "pt"
  #7=-\!t\!dimenE%                             ** -C*(X1-X0)
  \advance #7\!dimenF%                         ** B=S1-C*(X1-X0)
  \ignorespaces}

\def\!shade#1{%
% ** Get LS,RS,BS,TS for this panel
  \!getshrinkages#1<,,,>\!nil% %       ** now effective LS=dimE, RS=dimF,
%                                      **   BS=dimG, TS=dimH
  \advance \!dimenE \!xS%              ** now dimE=xS+LS
  \!lattice\!!xshade\!dshade\!dimenE%  ** set parity=index of left-mst x-lattice
    \!parity\!xpos%                    **   point >= xS+LS, xpos=its location
  \!dimenF=-\!dimenF%                  ** set dimF=xE-RS
    \advance\!dimenF \!xE
  \!loop\!not{\ifdim\!xpos>\!dimenF}%  ** loop over x-lattice points <= xE-RS
    \!shadecolumn%                 
    \advance\!xpos \!dshade%           ** move over to next column
    \advance\!parity 1%                ** increase index of x-point
  \repeat
  \!xS=\!xE%                           ** shift ending values to starting values
  \!ybS=\!ybE
  \!ytS=\!ytE
  \ignorespaces}

\def\!vgetshrinkages#1<#2,#3,#4,#5>#6\!nil{%
  \!override\!lshrinkage{#2}\!dimenE
  \!override\!rshrinkage{#3}\!dimenF
  \!override\!bshrinkage{#4}\!dimenG
  \!override\!tshrinkage{#5}\!dimenH
  \ignorespaces}
\def\!hgetshrinkages#1<#2,#3,#4,#5>#6\!nil{%
  \!override\!lshrinkage{#2}\!dimenG
  \!override\!rshrinkage{#3}\!dimenH
  \!override\!bshrinkage{#4}\!dimenE
  \!override\!tshrinkage{#5}\!dimenF
  \ignorespaces}

\def\!shadecolumn{%
  \!dxpos=\!xpos
  \advance\!dxpos -\!xS%            ** dx = x - xS
  \!removept\!dxpos\!dx%            ** ditto, without "pt"
  \!getylimits%                     ** get top and bottom y-values
  \advance\!ytpos -\!dimenH%        ** less TS
  \advance\!ybpos \!dimenG%         ** plus BS
  \!yloc=\!!yshade%                 ** get anchor point for this column
  \ifodd\!parity 
     \advance\!yloc \!dshade
  \fi
  \!lattice\!yloc{2\!dshade}\!ybpos%
    \!countA\!ypos%                 ** ypos=smallest y point for this column
  \!dimenA=-\!shadexorigin \advance \!dimenA \!xpos%      ** over
  \loop\!not{\ifdim\!ypos>\!ytpos}% ** loop over ypos <= yt(t)
    \!setshadelocation%             ** vmode: xloc=xpos, yloc=ypos 
%                                   ** hmode: xloc=ypos, yloc=xpos 
    \!rotateaboutpivot\!xloc\!yloc%
    \!dimenA=-\!shadexorigin \advance \!dimenA \!xloc%    ** over
    \!dimenB=-\!shadeyorigin \advance \!dimenB \!yloc%    ** up
    \kern\!dimenA \raise\!dimenB\copy\!shadesymbol \kern-\!dimenA
    \advance\!ypos 2\!dshade
  \repeat
  \ignorespaces}
 
\def\!qgetylimits{%
  \!dimenA=\!dx\!ytC              
  \advance\!dimenA \!ytB%         ** yt(t)=ytS + dx*(Bt + dx*Ct)
  \!ytpos=\!dx\!dimenA
  \advance\!ytpos \!ytS
  \!dimenA=\!dx\!ybC              
  \advance\!dimenA \!ybB%         ** yb(t)=ybS + dx*(Bb + dx*Cb)
  \!ybpos=\!dx\!dimenA
  \advance\!ybpos \!ybS}
 
\def\!lgetylimits{%
  \!ytpos=\!dx\!ytB%              ** yt(t)=ytS + dx*Bt
  \advance\!ytpos \!ytS
  \!ybpos=\!dx\!ybB%              ** yb(t)=ybS + dx*Bb
  \advance\!ybpos \!ybS}
 
\def\!vsetshadelocation{%         ** vmode: xloc=xpos, yloc=ypos 
  \!xloc=\!xpos
  \!yloc=\!ypos}
\def\!hsetshadelocation{%         ** hmode: xloc=ypos, yloc=xpos 
  \!xloc=\!ypos
  \!yloc=\!xpos}

% **************************************
% *** TICKS  (Draws ticks on graphs) ***
% **************************************

% ** User commands
% **   \ticksout
% **   \ticksin
% **   \gridlines
% **   \nogridlines
% **   \loggedticks
% **   \unloggesticks
% ** See Subsection 3.4 of the manual

% ** The following is an option of the \axis command
% **   ticks 
% **     [in] [out] 
% **     [long] [short] [length <LENGTH>] 
% **     [width <WIDTH>]
% **     [andacross] [butnotacross] 
% **     [logged] [unlogged] 
% **     [unlabeled] [numbered] [withvalues VALUE1 VALUE2 ... VALUEk / ]
% **     [quantity Q] [at LOC1 LOC2 ... LOCk / ] [from LOC1 to LOC2 by
% **       LOC_INCREMENT]
% ** See Subsection 3.2 of the manual for the rules.

% ** The various options of the  tick  field are processed by the
% ** \!nextkeyword  command defined below.
% ** For example, `\!nextkeyword short '  expands to  `\!ticksshort',
% ** while `\!nextkeyword withvalues' expands to `\!tickswithvalues'.

\def\!axisticks {%
  \def\!nextkeyword##1 {%
    \expandafter\ifx\csname !ticks##1\endcsname \relax
      \def\!next{\!fixkeyword{##1}}%
    \else
      \def\!next{\csname !ticks##1\endcsname}%
    \fi
    \!next}%
  \!axissetup
    \def\!axissetup{\relax}%
  \edef\!ticksinoutsign{\!ticksinoutSign}%
  \!ticklength=\longticklength
  \!tickwidth=\linethickness
  \!gridlinestatus
  \!setticktransform
  \!maketick
  \!tickcase=0
  \def\!LTlist{}%
  \!nextkeyword}

\def\ticksout{%
  \def\!ticksinoutSign{+}}

\ticksout

\def\nogridlines{%
  \def\!gridlinestatus{\!gridlinestoofalse}}
\nogridlines

\def\loggedticks{%
  \def\!setticktransform{\let\!ticktransform=\!logten}}
\def\unloggedticks{%
  \def\!setticktransform{\let\!ticktransform=\!donothing}}
\def\!donothing#1#2{\def#2{#1}}
\unloggedticks

% ** \!ticks/ : terminates read of tick options
\expandafter\def\csname !ticks/\endcsname{%
  \!not {\ifx \!LTlist\empty}
    \!placetickvalues
  \fi
  \def\!tickvalueslist{}%
  \def\!LTlist{}%
  \expandafter\csname !axis/\endcsname}

\def\!maketick{%
  \setbox\!boxA=\hbox{%
    \beginpicture
      \!setdimenmode
      \setcoordinatesystem point at {\!zpt} {\!zpt}   
      \linethickness=\!tickwidth
      \ifdim\!ticklength>\!zpt
        \putrule from {\!zpt} {\!zpt} to
          {\!ticksinoutsign\!tickxsign\!ticklength}
          {\!ticksinoutsign\!tickysign\!ticklength}
      \fi
      \if!gridlinestoo
        \putrule from {\!zpt} {\!zpt} to
          {-\!tickxsign\!xaxislength} {-\!tickysign\!yaxislength}
      \fi
    \endpicturesave <\!Xsave,\!Ysave>}%
    \wd\!boxA=\!zpt}
  
\def\!ticksin{%
  \def\!ticksinoutsign{-}%
  \!maketick
  \!nextkeyword}

\def\!ticksout{%
  \def\!ticksinoutsign{+}%
  \!maketick
  \!nextkeyword}

\def\!tickslength<#1> {%
  \!ticklength=#1\relax
  \!maketick
  \!nextkeyword}

\def\!tickslong{%
  \!tickslength<\longticklength> }

\def\!ticksshort{%
  \!tickslength<\shortticklength> }

\def\!tickswidth<#1> {%
  \!tickwidth=#1\relax
  \!maketick
  \!nextkeyword}

\def\!ticksandacross{%
  \!gridlinestootrue
  \!maketick
  \!nextkeyword}

\def\!ticksbutnotacross{%
  \!gridlinestoofalse
  \!maketick
  \!nextkeyword}

\def\!tickslogged{%
  \let\!ticktransform=\!logten
  \!nextkeyword}

\def\!ticksunlogged{%
  \let\!ticktransform=\!donothing
  \!nextkeyword}

\def\!ticksunlabeled{%
  \!tickcase=0
  \!nextkeyword}

\def\!ticksnumbered{%
  \!tickcase=1
  \!nextkeyword}

\def\!tickswithvalues#1/ {%
  \edef\!tickvalueslist{#1! /}%
  \!tickcase=2
  \!nextkeyword}

\def\!ticksquantity#1 {%
  \ifnum #1>1
    \!updatetickoffset
    \!countA=#1\relax
    \advance \!countA -1
    \!ticklocationincr=\!axisLength
      \divide \!ticklocationincr \!countA
    \!ticklocation=\!axisstart
    \loop \!not{\ifdim \!ticklocation>\!axisend}
      \!placetick\!ticklocation
      \ifcase\!tickcase
          \relax %  Case 0: no labels
        \or
          \relax %  Case 1: numbered -- not available here
        \or
          \expandafter\!gettickvaluefrom\!tickvalueslist
          \edef\!tickfield{{\the\!ticklocation}{\!value}}%
          \expandafter\!listaddon\expandafter{\!tickfield}\!LTlist%
      \fi
      \advance \!ticklocation \!ticklocationincr
    \repeat
  \fi
  \!nextkeyword}

\def\!ticksat#1 {%
  \!updatetickoffset
  \edef\!Loc{#1}%
  \if /\!Loc
    \def\next{\!nextkeyword}%
  \else
    \!ticksincommon
    \def\next{\!ticksat}%
  \fi
  \next}    
      
\def\!ticksfrom#1 to #2 by #3 {%
  \!updatetickoffset
  \edef\!arg{#3}%
  \expandafter\!separate\!arg\!nil
  \!scalefactor=1
  \expandafter\!countfigures\!arg/
  \edef\!arg{#1}%
  \!scaleup\!arg by\!scalefactor to\!countE
  \edef\!arg{#2}%
  \!scaleup\!arg by\!scalefactor to\!countF
  \edef\!arg{#3}%
  \!scaleup\!arg by\!scalefactor to\!countG
  \loop \!not{\ifnum\!countE>\!countF}
    \ifnum\!scalefactor=1
      \edef\!Loc{\the\!countE}%
    \else
      \!scaledown\!countE by\!scalefactor to\!Loc
    \fi
    \!ticksincommon
    \advance \!countE \!countG
  \repeat
  \!nextkeyword}

\def\!updatetickoffset{%
  \!dimenA=\!ticksinoutsign\!ticklength
  \ifdim \!dimenA>\!offset
    \!offset=\!dimenA
  \fi}

\def\!placetick#1{%
  \if!xswitch
    \!xpos=#1\relax
    \!ypos=\!axisylevel
  \else
    \!xpos=\!axisxlevel
    \!ypos=#1\relax
  \fi
  \advance\!xpos \!Xsave
  \advance\!ypos \!Ysave
  \kern\!xpos\raise\!ypos\copy\!boxA\kern-\!xpos
  \ignorespaces}

\def\!gettickvaluefrom#1 #2 /{%
  \edef\!value{#1}%
  \edef\!tickvalueslist{#2 /}%
  \ifx \!tickvalueslist\!endtickvaluelist
    \!tickcase=0
  \fi}
\def\!endtickvaluelist{! /}

\def\!ticksincommon{%
  \!ticktransform\!Loc\!t
  \!ticklocation=\!t\!!unit
  \advance\!ticklocation -\!!origin
  \!placetick\!ticklocation
  \ifcase\!tickcase
    \relax % Case 0: no labels
  \or %      Case 1: numbered
    \ifdim\!ticklocation<-\!!origin
      \edef\!Loc{$\!Loc$}%
    \fi
    \edef\!tickfield{{\the\!ticklocation}{\!Loc}}%
    \expandafter\!listaddon\expandafter{\!tickfield}\!LTlist%
  \or %      Case 2: labeled
    \expandafter\!gettickvaluefrom\!tickvalueslist
    \edef\!tickfield{{\the\!ticklocation}{\!value}}%
    \expandafter\!listaddon\expandafter{\!tickfield}\!LTlist%
  \fi}

\def\!separate#1\!nil{%
  \!ifnextchar{-}{\!!separate}{\!!!separate}#1\!nil}
\def\!!separate-#1\!nil{%
  \def\!sign{-}%
  \!!!!separate#1..\!nil}
\def\!!!separate#1\!nil{%
  \def\!sign{+}%
  \!!!!separate#1..\!nil}
\def\!!!!separate#1.#2.#3\!nil{%
  \def\!arg{#1}%
  \ifx\!arg\!empty
    \!countA=0
  \else
    \!countA=\!arg
  \fi
  \def\!arg{#2}%
  \ifx\!arg\!empty
    \!countB=0
  \else
    \!countB=\!arg
  \fi}
 
\def\!countfigures#1{%
  \if #1/%
    \def\!next{\ignorespaces}%
  \else
    \multiply\!scalefactor 10
    \def\!next{\!countfigures}%
  \fi
  \!next}

\def\!scaleup#1by#2to#3{%
  \expandafter\!separate#1\!nil
  \multiply\!countA #2\relax
  \advance\!countA \!countB
  \if -\!sign
    \!countA=-\!countA
  \fi
  #3=\!countA
  \ignorespaces}

\def\!scaledown#1by#2to#3{%
  \!countA=#1\relax%                          ** get original #
  \ifnum \!countA<0 %                         ** take abs value,
    \def\!sign{-}%                            **   remember sign
    \!countA=-\!countA
  \else
    \def\!sign{}%
  \fi
  \!countB=\!countA%                          ** copy |#|
  \divide\!countB #2\relax%                   ** integer part (|#|/sf)
  \!countC=\!countB%                          ** get sf * (|#|/sf)
    \multiply\!countC #2\relax
  \advance \!countA -\!countC%                ** ctA is now remainder
  \edef#3{\!sign\the\!countB.}%               ** +- integerpart.
  \!countC=\!countA %                         ** Tack on proper number
  \ifnum\!countC=0 %                          **   of zeros after .
    \!countC=1
  \fi
  \multiply\!countC 10
  \!loop \ifnum #2>\!countC
    \edef#3{#3\!zero}%
    \multiply\!countC 10
  \repeat
  \edef#3{#3\the\!countA}%                    ** Add on rest of remainder
  \ignorespaces}

\def\!placetickvalues{%
  \advance\!offset \tickstovaluesleading
  \if!xswitch
    \setbox\!boxA=\hbox{%
      \def\\##1##2{%
        \!dimenput {##2} [B] (##1,\!axisylevel)}%
      \beginpicture 
        \!LTlist
      \endpicturesave <\!Xsave,\!Ysave>}%
    \!dimenA=\!axisylevel
      \advance\!dimenA -\!Ysave
      \advance\!dimenA \!tickysign\!offset
      \if -\!tickysign
        \advance\!dimenA -\ht\!boxA
      \else
        \advance\!dimenA  \dp\!boxA
      \fi
    \advance\!offset \ht\!boxA 
      \advance\!offset \dp\!boxA
    \!dimenput {\box\!boxA} [Bl] <\!Xsave,\!Ysave> (\!zpt,\!dimenA)
  \else
    \setbox\!boxA=\hbox{%
      \def\\##1##2{%
        \!dimenput {##2} [r] (\!axisxlevel,##1)}%
      \beginpicture 
        \!LTlist
      \endpicturesave <\!Xsave,\!Ysave>}%
    \!dimenA=\!axisxlevel
      \advance\!dimenA -\!Xsave
      \advance\!dimenA \!tickxsign\!offset
      \if -\!tickxsign
        \advance\!dimenA -\wd\!boxA
      \fi
    \advance\!offset \wd\!boxA
    \!dimenput {\box\!boxA} [Bl] <\!Xsave,\!Ysave> (\!dimenA,\!zpt)
  \fi}

\normalgraphs
\catcode`!=12 %  *****  THIS MUST NEVER BE OMITTED

% ********************** START OF POSTPICTEX.TEX ********************
% This is postpictex.tex  Version 1.1  9/10/87. See section 10 of the manual.

% To use the PiCTeX macros under LaTeX, you first need to \input the
% file prepictex.tex, then the main corpus of PiCTeX macros (pictex.tex), 
% and finally this file.  Do not \input the file latexpicobjs.tex.
 
\catcode`@=11 \catcode`!=11
  
% Save meanings of PiCTeX keywords that duplicate LaTeX keywords
\let\!pictexendpicture=\endpicture 
\let\!pictexframe=\frame
\let\!pictexlinethickness=\linethickness
\let\!pictexmultiput=\multiput
\let\!pictexput=\put

% Redefine the PiCTeX \beginpicture macro
\def\beginpicture{%
  \setbox\!picbox=\hbox\bgroup%
  \let\endpicture=\!pictexendpicture
  \let\frame=\!pictexframe
  \let\linethickness=\!pictexlinethickness
  \let\multiput=\!pictexmultiput
  \let\put=\!pictexput
  \let\input=\@@input   % \@@input is LaTeX's saved version of TeX's primitive
  \!xleft=\maxdimen  
  \!xright=-\maxdimen
  \!ybot=\maxdimen
  \!ytop=-\maxdimen}

% Reestablish LaTeX's meaning of \frame. This makes
% PiCTeX's meaning of \frame available only inside a PiCture.
\let\frame=\!latexframe

% Make PiCTeX's meaning of \frame available everywhere in the
% guise of \pictexframe
\let\pictexframe=\!pictexframe

% Now do the same for \linethickness
\let\linethickness=\!latexlinethickness
\let\pictexlinethickness=\!pictexlinethickness

% Reset LaTeX's default meaning of \\
\let\\=\@normalcr
\catcode`@=12 \catcode`!=12
% ********************** END OF POSTPICTEX.TEX ********************

\begin{document}

\title{Galkin Quandles, Pointed Abelian Groups, and Sequence $A000712$}

\author{W. Edwin Clark}
\address{Department of Mathematics  and Statistics,
University of South Florida, Tampa, FL 33620}
\email{wclark@mail.usf.edu}

\author{Xiang-dong Hou}
\address{Department of Mathematics  and Statistics,
University of South Florida, Tampa, FL 33620}
\email{xhou@usf.edu}

\keywords{Galkin quandle, knot, Frobenius symbol, partition number, pointed abelian group}

\subjclass[2000]{05A17, 20K30, 57M27}
 
\begin{abstract}
For each pointed abelian group $(A,c)$, there is an associated {\em Galkin quandle} $G(A,c)$ which is an algebraic structure defined on $\Bbb Z_3\times A$ that can be used to construct knot invariants. It is known that two finite Galkin quandles are isomorphic if and only if their associated pointed abelian groups are isomorphic. In this paper we classify all finite pointed abelian groups. We show that the number of nonisomorphic pointed abelian groups of order $q^n$ ($q$ prime) is $\sum_{0\le m\le n}p(m)p(n-m)$, where $p(m)$ is the number of partitions of integer $m$. 
\end{abstract}

\maketitle

%%%%%%%%%%%%%%%%%%%%%%%%%%%%%%%%%%%%%%%%%%%%%%%%%%%%%%%%%%%%%%%%%%%%%%%%%
%             section 1 
%%%%%%%%%%%%%%%%%%%%%%%%%%%%%%%%%%%%%%%%%%%%%%%%%%%%%%%%%%%%%%%%%%%%%%%%%
\section{Introduction}

The purpose of this paper is to demonstrate some nice connections between the three objects in the title: Galkin quandle, pointed abelian group, and sequence $A000712$ 
(number of partitions of $n$ into parts of $2$ kinds). First, let us describe the three objects briefly.

\subsection*{Galkin quandles}\

A {\em quandle} is a set $X$ equipped with an operation $*$ satisfying the following conditions. 
\begin{itemize}
\item[(i)] For each $x\in X$,
$x*x=x$.

\item[(ii)]
For each $y\in X$, the mapping $x\mapsto x*y$ is a permutation of $X$.

\item[(iii)]  For all $x,y,z\in X$, 
$(x*y)*z=(x*z)*(y*z)$.
\end{itemize} 

A {\em coloring} of a knot diagram (directed) by a quandle $(X,*)$ is a labeling of the arcs of the diagram by the elements of $X$ such that at each crossing the rule depicted in Figure~\ref{F1} is observed. The number of colorings of a knot $K$ by a quandle $X$, denoted by $N_X(K)$, is a knot invariant that can be used to distinguish nonequivalent knots \cite{CKS, CEHSY}

\bigskip

%%%%%%%%%%%%%%%% Fig 1 %%%%%%%%%%%%%%%%%%
\begin{figure}[h]
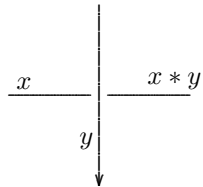
\label{F1}
\[
\beginpicture
\setcoordinatesystem units <4mm,4mm> point at 0 0
\arrow <4pt> [0.3, 0.67] from 0 3 to 0 -3
\setlinear 
\plot -3 0  -0.3 0 /
\plot 0.3 0  3 0 /
\put {$x$} [b] at -2.5 0.2
\put {$x*y$} [b] at 2.5 0.2 
\put {$y$} [r] at -0.2 -1.5
\endpicture
\] 
\medskip
\caption{Colors at a crossing}
\end{figure}

%%%%%%%%%%% end Fig 1 %%%%%%%%%%%%%%%%%%%

Define $\mu,\tau:\Bbb Z_3\to\Bbb Z$ by
\[
\mu(x)=\begin{cases} 
2&\text{if}\ x=0,\cr
-1&\text{if}\ x\ne 0,
\end{cases}
\qquad
\tau(x)=\begin{cases} 
1&\text{if}\ x=2,\cr
0&\text{if}\ x\ne 2.
\end{cases}
\]
Let $A$ be an abelian group and $c\in A$. For $(x,a),(y,b)\in \Bbb Z_3\times A$, define
\[
(x,a)*(y,b)=(-x-y,\ -a+\mu(x-y)b+\tau(x-y)c).
\]
The {\em Galkin quandle}, denoted by $G(A,c)$, is the structure $(\Bbb Z_3\times A,*)$. This construction was first given by Galkin in \cite{Gal88} for $A=\Bbb Z_p$ and was recently generalized to any abelian group $A$ in \cite{CEHSY}. For more properties of Galkin quandles, see \cite{CEHSY}.

\subsection*{Pointed abelian groups}\

A {\em pointed abelian group} is a pair $(A,c)$ where $A$ is an abelian group and $c\in A$. A morphism from a pointed abelian group $(A,c)$ to another pointed abelian group $(A',c')$ is a homomorphism $f:A\to A'$ such that $f(c)=c'$. The category of pointed abelian groups is denoted by ${\bf Ab}_0$.

\subsection*{Sequence $A000712$}\

In the On-Line Encyclopedia of Integer Sequences \cite{OEIS}, $A000712$ is the sequence $\{a(n)\}_{n=0}^\infty$, where $a(n)$ is number of partitions of $n$ into parts of $2$ kinds. One has 
\[
\sum_{n=0}^\infty a(n)x^n=\prod_{m=1}^\infty\frac 1{(1-x^m)^2}
\]
and
\[
a(n)=\sum_{0\le m\le n}p(m)p(n-m),
\]
where $p(m)$ is the number of partitions of $m$. For many other interpretations of $a(n)$, see \cite{OEIS}. 

Let ${\bf Q}$ denote the category of quandles. For each pointed abelian group $(A,c)$, define $\mathcal F(A,c)=G(A,c)$, and for each morphism $f:(A,c)\to (A',c')$ of pointed abelian groups, let $\mathcal Ff:G(A,c)\to G(A',c')$ be the quandle homomorphism defined by
\[
(\mathcal Ff)(x,a)=(x,f(a)),\qquad (x,a)\in\Bbb Z_3\times A.
\]
Then $\mathcal F$ is a functor from ${\bf Ab}_0$ to ${\bf Q}$ \cite{CEHSY}. In particular, if $(A,c)$ and $(A',c')$ are isomorphic pointed abelian groups, then $G(A,c)$ and $G(A',c')$ are isomorphic quandles. For finite Galkin quandles, the converse is also true: If $G(A,c)$ and $G(A',c')$ are isomorphic quandles, then $(A,c)$ and $(A',c')$ are isomorphic pointed abelian groups \cite{CEHSY}. Therefore, classification of finite Galkin quandles is the same as classification of finite pointed abelian groups.

Let $(A,c)$ be a finite pointed abelian group. We use $q$ to denote a prime to avoid confusion with the partition number $p(n)$.
For each prime $q$, denote the $q$-part of $A$ by $A_q$. ($A_q=\{a\in A:q^na=0$ for some $n\ge 0\}$.) Then $A=\bigoplus_qA_q$. Write $c=\sum_q c_q$, $c_q\in A_q$. Then
\[
(A,c)=\bigoplus_q(A_q,c_q),
\]
where the meaning of the direct sum of pointed abelian groups is self explaining. If $(A',c')$ is a another finite pointed abelian group, then $(A,c)\cong (A',c')$ if and only if $(A_q,c_q)\cong (A_q',c_q')$ for all primes $q$. Therefore, to classify all finite pointed abelian groups, it suffices to classify all finite pointed abelian $q$-groups.

Let $A$ be a finite abelian $q$-group and let $\text{Aut}(A)$ act on $A$ naturally. For $c,c'\in A$, the pointed abelian groups $(A,c)$ and $(A,c')$ are isomorphic if and only if $c$ and $c'$ are in the same $\text{Aut}(A)$-orbit of $A$. The automorphism group $\text{Aut}(A)$ is well known \cite{Hil-Rhe07, Ran07}. In section 2, we describe the orbit representatives of $A$ under the action of $\text{Aut}(A)$. Thus a classification of finite pointed abelian $q$-groups is obtained. This classification allows us to compute the number $N(n)$ of nonisomorphic pointed abelian groups of order $q^n$. ($N(n)$ is independent of $q$.)   
The initial formula for $N(n)$ resulting from the classification is rather complicated and does not suggest any connection to any well known sequence. However, a search through the On-Line Encyclopedia of Integer Sequences (OEIS) shows that the numerical values of $N(n)$ that we have computed match the sequence $A000712$. In section 3, we confirm that $N(n)$ is indeed the sequence $A000712$. The proof is rather tricky; the key step is a formula for the partition number $p(n)$ based on a slight variation of the {\em Frobenius symbol} of a partition.

%%%%%%%%%%%%%%%%%%%%%%%%%%%%%%%%%%%%%%%%
%          section 2 
%%%%%%%%%%%%%%%%%%%%%%%%%%%%%%%%%%%%%%%% 

\section{Classification of Finite Pointed Abelian $q$-Groups}

Let $q$ be a prime and 
\begin{equation}\label{2.1}
A=A_1\oplus\cdots\oplus A_k,
\end{equation}
where $A_i=\Bbb Z_{q^{e_i}}^{n_i}$, $1\le e_1<\cdots<e_k$ and $n_i>0$, $1\le i\le k$. Let $\pi_i:A\to A_i$ be the projection and $\iota_j:A_j\to A$ be the inclusion. Then 
\[
\text{End}_\Bbb Z(A)=\bigoplus_{i,j}\iota_j\text{Hom}_\Bbb Z(A_i,A_j)\pi_i.
\]
The mapping $\iota_j(\ )\pi_i:\text{Hom}_\Bbb Z(A_i,A_j)\to \iota_j\text{Hom}_\Bbb Z(A_i,A_j)\pi_i$ is an isomorphism. We will identify $\text{Hom}_\Bbb Z(A_i,A_j)$ with
$\iota_j\text{Hom}_\Bbb Z(A_i,A_j)\pi_i$ through this isomorphism. Thus we can write
\[
\text{End}_\Bbb Z(A)=\bigoplus_{i,j}\text{Hom}_\Bbb Z(A_i,A_j).
\]
For $\sigma=\sum_{i,j}\sigma_{ij}\in \text{End}_\Bbb Z(A)$, where $\sigma_{ij}\in \text{Hom}_\Bbb Z(A_i,A_j)$, it is well known that $\sigma\in\text{Aut}(A)$ if and only if $\sigma_{ii}\in\text{Aut}(A_i)$ for all $1\le i\le k$ \cite{Hil-Rhe07, Ran07}.

\begin{thm}\label{T2.1}
Let $A$ be a finite abelian $q$-group written in the form \eqref{2.1}. For each $1\le i\le k$, choose $\epsilon_i\in A_i\setminus qA_i$. Let $\mathcal I(e_1,\dots,e_k)$ be the set whose elements are sequence of integer pairs $(i_1,f_1),\dots,(i_l,f_l)$ satisfying 
\begin{itemize}
  \item [(i)] $l\ge 0$, (the sequence is empty when $l=0$,)
  \item [(ii)] $1\le i_1<\cdots<i_l\le k$,
  \item [(iii)] $0\le f_s\le e_{i_s}-1,\ 1\le s\le l$,
  \item [(iv)] $0<f_{s+1}-f_s<e_{i_{s+1}}-e_{i_s},\ 1\le s\le l-1$.
\end{itemize}  
Then as $(i_1,f_1),\dots,(i_l,f_l)$ runs through $\mathcal I(e_1,\dots,e_k)$, 
\begin{equation}\label{2.2}
\sum_{s=1}^lq^{f_s}\epsilon_{i_s}
\end{equation}
gives a complete list of orbit representatives of $A$ under the action of $\text{\rm Aut}(A)$.
\end{thm}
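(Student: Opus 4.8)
\medskip
\noindent\emph{Proposed approach.}\
The plan is to prove two complementary statements: (a) every element of $A$ is $\text{Aut}(A)$-equivalent to an expression \eqref{2.2} indexed by some sequence in $\mathcal I(e_1,\dots,e_k)$ (exhaustiveness), and (b) distinct sequences in $\mathcal I(e_1,\dots,e_k)$ produce elements in different orbits (irredundancy). Together these say that \eqref{2.2}, as the index sequence runs over $\mathcal I(e_1,\dots,e_k)$, lists each orbit exactly once. Throughout I would use the block decomposition $\text{End}_\Bbb Z(A)=\bigoplus_{i,j}\text{Hom}_\Bbb Z(A_i,A_j)$ and the automorphism criterion recalled above, so that all blockwise-diagonal maps and all maps $\text{id}+\nu$ with $\nu$ strictly off-diagonal belong to $\text{Aut}(A)$.

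For (a) I would argue in two steps. First (diagonalization), given $c=\sum_ic_i$ with $c_i\in A_i$, for each $i$ with $c_i\ne0$ factor $c_i=q^{g_i}w_i$ where $g_i$ is the least $q$-adic valuation among the coordinates of $c_i$ in $\Bbb Z_{q^{e_i}}^{n_i}$; then $w_i\notin qA_i$ and $0\le g_i\le e_i-1$. Since a vector outside $qA_i$ extends to a basis of $A_i$, $\text{Aut}(A_i)$ acts transitively on $A_i\setminus qA_i$, so some $\gamma_i\in\text{Aut}(A_i)$ carries $w_i$ to $\epsilon_i$; applying the blockwise-diagonal automorphism $\bigoplus_i\gamma_i$ (identity on the zero blocks) replaces $c$ by $\sum_{t=1}^mq^{g_{j_t}}\epsilon_{j_t}$, where $\{j_1<\cdots<j_m\}=\{i:c_i\ne0\}$. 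Second (cleanup), as long as some consecutive pair $j_t<j_{t+1}$ in the current expression violates (iv) I remove one of the two blocks. If $g_{j_{t+1}}\le g_{j_t}$, pick $\nu\in\text{Hom}(A_{j_{t+1}},A_{j_t})$ sending $\epsilon_{j_{t+1}}$ to $-q^{g_{j_t}-g_{j_{t+1}}}\epsilon_{j_t}$ (legitimate because $\text{ord}(\epsilon_{j_{t+1}})=q^{e_{j_{t+1}}}$ is a multiple of the exponent $q^{e_{j_t}}$ of $A_{j_t}$); then $\text{id}+\nu$ adds $-q^{g_{j_t}}\epsilon_{j_t}$ to the $A_{j_t}$-block and fixes all others, erasing $j_t$. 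If instead $g_{j_{t+1}}-g_{j_t}\ge e_{j_{t+1}}-e_{j_t}$, use that every $\nu\in\text{Hom}(A_{j_t},A_{j_{t+1}})$ has image in $q^{e_{j_{t+1}}-e_{j_t}}A_{j_{t+1}}$, and conversely can send $q^{g_{j_t}}\epsilon_{j_t}$ to any element of $q^{g_{j_t}+e_{j_{t+1}}-e_{j_t}}A_{j_{t+1}}$, in particular (by the failure of (iv)) to $-q^{g_{j_{t+1}}}\epsilon_{j_{t+1}}$, erasing $j_{t+1}$. Each move decreases the number of nonzero blocks, so the process halts; since the only violations of (iv) are between consecutive pairs, the final expression is of the form \eqref{2.2} with index in $\mathcal I(e_1,\dots,e_k)$.

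For (b) I would introduce the height sequence $U(c)=(u_0,u_1,\dots)$, $u_j=\text{ht}_A(q^jc)$ with $\text{ht}_A(a)=\max\{s:a\in q^sA\}$ and $\text{ht}_A(0)=\infty$; since $q^sA$ is characteristic, $U$ is an $\text{Aut}(A)$-invariant. Using $q^sA=\bigoplus_iq^sA_i$ and $\text{ht}_{A_i}(q^f\epsilon_i)=f$ for $0\le f\le e_i-1$, for $c=\sum_{s=1}^lq^{f_s}\epsilon_{i_s}$ one gets $q^jc=\sum_{s:\,j<d_s}q^{j+f_s}\epsilon_{i_s}$ with $d_s:=e_{i_s}-f_s$, hence $u_j=j+\min\{f_s:j<d_s\}$. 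Condition (iv) forces $(f_s)$ and $(d_s)$ to be strictly increasing, so (with $d_0:=0$) $u_j=j+f_s$ for $d_{s-1}\le j<d_s$ and $u_j=\infty$ for $j\ge d_l$; thus $u$ rises by exactly $1$ within a block and jumps by $f_{s+1}-f_s+1\ge2$ (or to $\infty$) at each $j=d_s$. From $U(c)$ one then recovers $d_1<\cdots<d_l$ as the positions of the jumps (by $\ge2$ or to $\infty$), then $f_s=u_{d_{s-1}}-d_{s-1}$, then $e_{i_s}=d_s+f_s$, and finally $i_s$ since the $e_i$ are distinct. Hence distinct sequences in $\mathcal I(e_1,\dots,e_k)$ give distinct invariants, which proves (b) and, together with (a), the theorem.

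I expect the cleanup step in (a) to be the main obstacle: it requires knowing precisely the images of the off-diagonal Hom-groups ($\text{Hom}(A_i,A_j)$ maps onto $A_j$ when $e_i>e_j$ and onto $q^{e_j-e_i}A_j$ when $e_i<e_j$), checking that the prescribed corrections really define homomorphisms, and confirming that the erasure procedure is well-founded and terminates inside $\mathcal I(e_1,\dots,e_k)$ with no surviving violation. The diagonalization step and the height-sequence computation should be comparatively routine once the transitivity of $\text{Aut}(A_i)$ on $A_i\setminus qA_i$ and the invariance of $U$ are established.
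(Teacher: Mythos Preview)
Your proposal is correct. Part (a) is essentially the paper's argument: the paper diagonalizes via $\text{Aut}(A_i)$ and then, instead of your iterative cleanup, chooses $\sigma$ with $l$ \emph{minimum} and derives (iv) by contradiction using the very same off-diagonal homomorphisms you describe. The two presentations are interchangeable.

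Part (b), however, is a genuinely different route. The paper argues directly: given two distinct sequences, it picks an index $i_u$ where they differ (either $i_u$ is absent from the second sequence, or the corresponding exponent is larger) and shows, using condition (iv), that for every $\sigma=\sum_{i,j}\sigma_{ij}\in\text{Aut}(A)$ the $A_{i_u}$-component of $\sigma\bigl(\sum_sq^{f_s}\epsilon_{i_s}\bigr)$ lies in $q^{f_u}A_{i_u}\setminus q^{f_u+1}A_{i_u}$, whereas the $A_{i_u}$-component of the second element lies in $q^{f_u+1}A_{i_u}$. Your argument instead packages the obstruction into the Ulm-type height sequence $U(c)=(\text{ht}_A(q^jc))_{j\ge0}$ and shows that a canonical element determines, and is determined by, its jump pattern $d_1<\cdots<d_l$ with $d_s=e_{i_s}-f_s$. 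This is more conceptual---it explains \emph{why} condition (iv) is the right normalization (it makes both $(f_s)$ and $(d_s)$ strictly increasing, so the height sequence has a staircase shape with jumps exactly at the $d_s$)---and it ties the classification to a classical invariant of abelian $p$-groups. The paper's computation is shorter and stays entirely inside the block description of $\text{Aut}(A)$; yours is longer but makes the invariant manifest and would adapt more readily to related settings.
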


\begin{proof}
$1^\circ$
We first show that for each $x\in A$, there exists $\sigma\in\text{Aut}(A)$ such that $\sigma(x)$ is of the form \eqref{2.2}.

Clearly, the $\text{Aut}(A_i)$-orbits of $A_i$ are represented by $0$ and $q^t\epsilon_i$, $0\le t\le e_i-1$. Thus there exists $\sigma\in\text{Aut}(A)$ such that
\[
\sigma(x)=\sum_{1\le s\le l}q^{f_s}\epsilon_{i_s},
\]
where $1\le i_1<\cdots<i_l\le k$ and $0\le f_s\le e_{i_s}-1$, $1\le s\le l$. We further assume that $\sigma$ is so chosen that $l$ is {\em minimum}. We claim that 
\begin{equation}\label{2.3}
0<f_{s+1}-f_s<e_{i_{s+1}}-e_{i_s},\qquad 1\le s\le l-1.
\end{equation}

If, to the contrary of \eqref{2.3}, $f_{m+1}-f_m\le 0$ for some $1\le m\le l-1$, then there exists $\alpha\in\text{Hom}_\Bbb Z(A_{i_{m+1}},A_{i_m})$ such that $\alpha(q^{f_{m+1}}\epsilon_{i_{m+1}})
=q^{f_m}\epsilon_{i_m}$. Let $\beta=\text{id}_A-\alpha\in\text{Aut}(A)$. Then 
\[
\beta\sigma(x)=\beta\Bigl(\sum_{1\le s\le l}q^{f_s}\epsilon_{i_s}\Bigr)=\sum_{\substack{1\le s\le l\cr s\ne m}}q^{f_s}\epsilon_{i_s},
\]
which is a contradiction to the minimality of $l$.

If, to the contrary of \eqref{2.3}, $f_{m+1}-f_m\ge e_{i_{m+1}}-e_{i_m}$ for some $1\le m\le l-1$, then there exists $\alpha'\in\text{Hom}_\Bbb Z(A_{i_m},A_{i_{m+1}})$ such that $\alpha'(q^{f_m}\epsilon_{i_m})=q^{f_{m+1}}\epsilon_{i_{m+1}}$. Let $\beta'=\text{id}_A-\alpha'\in\text{Aut}(A)$. Then
\[
\beta'\sigma(x)=\sum_{\substack{1\le s\le l\cr s\ne m+1}}q^{f_s}\epsilon_{i_s},
\]
which is again a contradiction.

So \eqref{2.3} is proved. Now $\sigma(x)$ is of the form \eqref{2.2} and the sequence $(i_1,f_1),\dots,(i_l,f_l)$ satisfies conditions (i) -- (iv).

$2^\circ$
Assume that $(i_1,f_1),\dots,(i_l,f_l)$ and $(j_1,g_1),\dots,(j_m,g_m)$ are two different sequences in $\mathcal I(e_1,\dots,e_k)$. We show that $\sum_{1\le s\le l}q^{f_s}\epsilon_{i_s}$ and $\sum_{1\le s\le m}q^{g_s}\epsilon_{j_s}$ do not belong to the same $\text{Aut}(A)$-orbit.

Without loss of generality, we may assume that there exists $1\le u\le l$ such that either $i_u\notin\{j_1,\dots,j_m\}$ or $i_u=j_v$ for some $1\le v\le m$ but $f_u<g_v$.
For each $\sigma\in\text{Aut}(A)$ written in the form $\sigma=\sum_{i,j}\sigma_{ij}$, where $\sigma_{ij}\in\text{Hom}_\Bbb Z(A_i,A_j)$, $\sigma_{ii}\in\text{Aut}(A_i)$, the $A_{i_u}$-component of $\sigma(\sum_{1\le s\le l}q^{f_s}\epsilon_{i_s})$ is 
\[
\sum_{1\le i\le k}\sum_{1\le s\le l}\sigma_{i,i_u}(q^{f_s}\epsilon_{i_s}).
\]
Because of condition (iv), we have
\[
\sigma_{i,i_u}(q^{f_s}\epsilon_{i_s})\in 
\begin{cases}
q^{f_u}A_{i_u}\setminus q^{f_u+1}A_{i_u}&\text{if}\ s=u,\ i=i_u,\cr
q^{f_u+1}A_{i_u}&\text{otherwise}.
\end{cases}
\]
So the $A_{i_u}$-component of $\sigma(\sum_{1\le s\le l}q^{f_s}\epsilon_{i_s})$ belongs to $q^{f_u}A_{i_u}\setminus q^{f_u+1}A_{i_u}$. On the other hand, the $A_{i_u}$-component of $\sum_{1\le s\le m}q^{g_s}\epsilon_{j_s}$ belongs to $q^{f_u+1}A_{i_u}$. Thus
\[
\sigma\Bigl(\sum_{1\le s\le l}q^{f_s}\epsilon_{i_s}\Bigr)\ne \sum_{1\le s\le m}q^{g_s}\epsilon_{j_s}.
\]
\end{proof} 

%%%%%%%%%%%%%%%%%%%%%%%%%%%%%%%%%%%%%%%%
%          section 3 
%%%%%%%%%%%%%%%%%%%%%%%%%%%%%%%%%%%%%%%% 

\section{Number of Nonisomorphic Finite Pointed Abelian $q$-Groups}

It follows from Theorem~\ref{T2.1} that the number of nonisomorphic pointed abelian groups with the underlying group $\Bbb Z_{q^{e_1}}^{n_1}\oplus\cdots\oplus\Bbb Z_{q^{e_k}}^{n_k}$ ($1\le e_1<\cdots<e_k,\ n_i>0$) is $|\mathcal I(e_1,\dots, e_k)|$. Consequently, the number of nonisomorphic pointed abelian groups of order $q^n$, denoted by $N(n)$, is given by
\begin{equation}\label{3.1}
N(n)=\sum_{\substack{1\le e_1<\cdots<e_k\cr n_1,\dots,n_k>0\cr n_1e_1+\cdots+n_ke_k=n}}|\mathcal I(e_1,\dots,e_k)|.
\end{equation}
The values of $N(0), N(1), \dots$ are
\[
1,\ 2,\ 5,\ 10,\ 36,\ 65,\ 110,\ 185,\ 300,\ 481, \dots
\]
which point to the sequence $A000712$ as a possible match. However, in the form \eqref{3.1}, it is not clear that $N(n)$ is the sequence $A000712$.

\begin{thm}\label{T3.1}
$N(n)$ is the sequence $A000712$. Namely,
\begin{equation}\label{3.2}
N(n)=\sum_{0\le m\le n}p(m)p(n-m),\qquad n\ge 0.
\end{equation}
\end{thm}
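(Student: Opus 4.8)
The plan is to rewrite $N(n)$ as a coefficient in a generating function and show that this generating function equals $\prod_{m\ge 1}(1-x^m)^{-2}$. First I would reorganize the sum \eqref{3.1}: fixing the underlying group type means fixing a strictly increasing sequence $e_1<\cdots<e_k$ together with multiplicities $n_i>0$, but the crucial observation is that $|\mathcal I(e_1,\dots,e_k)|$ depends \emph{only} on the $e_i$, not on the $n_i$. So for a fixed tuple $(e_1,\dots,e_k)$, the contribution to $N(n)$ from all admissible $(n_1,\dots,n_k)$ is $|\mathcal I(e_1,\dots,e_k)|$ times the number of solutions of $n_1e_1+\cdots+n_ke_k=n$ in positive integers $n_i$. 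Summing over $n$, the tuple $(e_1,\dots,e_k)$ contributes $|\mathcal I(e_1,\dots,e_k)|\cdot\prod_{i=1}^k \frac{x^{e_i}}{1-x^{e_i}}$ to $\sum_n N(n)x^n$. Thus
\begin{equation}\label{gf1}
\sum_{n\ge 0}N(n)x^n=\sum_{k\ge 0}\ \sum_{1\le e_1<\cdots<e_k}|\mathcal I(e_1,\dots,e_k)|\prod_{i=1}^k\frac{x^{e_i}}{1-x^{e_i}}.
\end{equation}

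Next I would handle the inner combinatorial factor $|\mathcal I(e_1,\dots,e_k)|$. An element of $\mathcal I(e_1,\dots,e_k)$ is a choice of a subset $\{i_1<\cdots<i_l\}\subseteq\{1,\dots,k\}$ together with integers $f_s$ obeying $0\le f_s\le e_{i_s}-1$ and the ``gap'' conditions (iv). The natural move is to pass to the differences: set $d_s=f_{s+1}-f_s$ and $\delta_s=e_{i_{s+1}}-e_{i_s}$, so condition (iv) reads $1\le d_s\le \delta_s-1$, i.e. there are $\delta_s-1$ choices for each internal gap, while $f_1$ ranges over $e_{i_1}$ values and the top constraint $f_l\le e_{i_l}-1$ must also be respected. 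Carrying this count carefully — I expect this is where the real work lies — should let me express $|\mathcal I(e_1,\dots,e_k)|$, or better, the whole sum over subsequences, in terms of a product over the consecutive differences of the $e_i$. The mention in the introduction of a ``slight variation of the Frobenius symbol'' strongly suggests that after substituting into \eqref{gf1}, the telescoping structure of the $e_i$ and the geometric factors $x^{e_i}/(1-x^{e_i})$ combine so that the increasing sequence $(e_i)$ plays the role of one row of a Frobenius symbol (arm lengths) while an auxiliary sequence encoding the $f_s$ — equivalently the $d_s$ — plays the role of the other row (leg lengths); the identity $p(n)=\sum(\cdots)$ coming from the Frobenius symbol is then exactly what converts \eqref{gf1} into $\prod_m(1-x^m)^{-2}=\sum_n p(m)p(n-m)x^n$.

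Concretely, the key auxiliary identity I would aim to prove is a generating-function form of the Frobenius-symbol count for $p(n)$: something like $\sum_{n}p(n)x^n=\sum_{r\ge 0}\sum_{a_1>\cdots>a_r\ge 0,\ b_1>\cdots>b_r\ge 0}x^{\,r+\sum a_i+\sum b_i}\big/\prod(\text{something})$, massaged into a shape that matches \eqref{gf1} term by term after the substitution above. Once \eqref{gf1} is rewritten so that it visibly factors as a product of two identical series each equal to $\sum_m p(m)x^m=\prod_m(1-x^m)^{-1}$, extracting the coefficient of $x^n$ gives \eqref{3.2} and hence $N(n)=a(n)$.

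The main obstacle I anticipate is the bookkeeping in the middle step: correctly counting $\mathcal I(e_1,\dots,e_k)$ with all four constraints and then summing over \emph{all} strictly increasing tuples $(e_i)$ while keeping the geometric weights $x^{e_i}/(1-x^{e_i})$ attached. The constraints (iv) couple consecutive terms, so a direct expansion produces a messy multiple sum; the trick will be to find the right change of variables (differences of the $e_i$, differences of the $f_s$, plus the boundary terms $f_1$ and $e_{i_l}-1-f_l$) that decouples the sum into independent geometric-series factors, thereby revealing the square $\big(\prod_m(1-x^m)^{-1}\big)^2$. Verifying that this change of variables is a bijection onto the intended index set — in particular that the boundary inequalities translate correctly and that the Frobenius-symbol reinterpretation is exact — is the delicate part; the rest is routine manipulation of geometric series.
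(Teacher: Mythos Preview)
Your generating-function setup in \eqref{gf1} is correct, and the route can be completed, but the proposal stops short of the decisive step and the paper proceeds quite differently.

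The paper gives a direct bijection, with no generating functions. Its key observation is this: the data determining a pointed abelian group of order $q^n$ is $(e_1,\dots,e_k;n_1,\dots,n_k)$ together with $((i_1,f_1),\dots,(i_l,f_l))\in\mathcal I(e_1,\dots,e_k)$. Set $y_s=f_s$ and $x_s=e_{i_s}-f_s$; conditions (iii)--(iv) translate \emph{exactly} into $1\le x_1<\cdots<x_l$ and $0\le y_1<\cdots<y_l$. Thus $((x_s),(y_s))$ is a Frobenius-type pair for the integer $m=\sum_s e_{i_s}$, and Lemma~\ref{T3.2} says there are $p(m)$ such pairs. What remains---each $e_j$ with $j\notin\{i_1,\dots,i_l\}$ taken with multiplicity $n_j$, together with each $e_{i_s}$ taken with multiplicity $n_{i_s}-1$---is an arbitrary partition of $n-m$. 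Summing over $m$ gives \eqref{3.2} immediately.

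Your plan is missing two things that make this work. First, the effective change of variables is not ``differences of the $e_i$ and differences of the $f_s$'' but the splitting $e_{i_s}=x_s+y_s$ above; the point of the strict inequalities in (iv) is precisely that they force \emph{both} $(x_s)$ and $(y_s)$ to be strictly increasing. Second, and more seriously, you never separate the selected indices $i_1,\dots,i_l$ from the unselected ones. In generating-function language: reorganize \eqref{gf1} by first choosing $a_s:=e_{i_s}$ (with the $f_s$) and only then the remaining $e_j$'s. Each unused positive integer $e$ then contributes a factor $1+\frac{x^e}{1-x^e}=\frac{1}{1-x^e}$, so a full copy of $\prod_{e\ge 1}(1-x^e)^{-1}$ splits off; the residual sum $\sum_l\sum_{a,f}x^{\sum a_s}$ becomes, via $a_s=x_s+y_s$, the Frobenius-symbol generating function of Lemma~\ref{T3.2}, giving the second copy. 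That is where the square comes from---not from ``decoupling into independent geometric-series factors,'' which your difference variables will not produce on their own. So your strategy is salvageable, but as written it does not contain the idea that carries the argument.
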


The key step in the proof of Theorem~\ref{T3.1} is the following lemma.

\begin{lem}\label{T3.2}
For integers $k,l\ge 0$ define
\begin{gather*}
A(k,l)=\{(x_1,\dots,x_k)\in\Bbb Z^k:1\le x_1<\cdots<x_k,\ x_1+\cdots+x_k=l\},\\
B(k,l)=\{(x_1,\dots,x_k)\in\Bbb Z^k:0\le x_1<\cdots<x_k,\ x_1+\cdots+x_k=l\}.
\end{gather*}
Then 
\begin{equation}\label{3.3}
p(n)=\sum_{0\le k\le l\le n}|A(k,l)||B(k,n-l)|=\sum_{0\le k\le l\le n}|B(k,l-k)||B(k,n-l)|.
\end{equation}
\end{lem}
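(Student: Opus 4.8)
The plan is to establish the identity via the Frobenius-symbol description of partitions. Recall that a partition $\lambda$ of $n$ has a Frobenius symbol $\binom{a_1\ \cdots\ a_k}{b_1\ \cdots\ b_k}$, where $k$ is the size of the Durfee square, $a_1>\cdots>a_k\ge 0$ are the arm lengths (number of cells in row $i$ to the right of the diagonal) and $b_1>\cdots>b_k\ge 0$ are the leg lengths (number of cells in column $i$ below the diagonal), and $n=k+\sum_i a_i+\sum_i b_i$. Each partition of $n$ corresponds bijectively to such a symbol; conversely any choice of $k\ge 0$ and strictly decreasing nonnegative sequences $a_1>\cdots>a_k\ge 0$, $b_1>\cdots>b_k\ge 0$ with $k+\sum a_i+\sum b_i=n$ gives a partition of $n$. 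The slight variation mentioned in the introduction is to absorb the ``$+k$'' from the Durfee square by shifting: replacing $(a_1,\dots,a_k)$ by $(a_k+1,a_{k-1}+1,\dots,a_1+1)$ turns a strictly decreasing nonnegative sequence summing to $l-k$ into a strictly increasing positive sequence summing to $l$. This is exactly the passage from a $B$-type vector to an $A$-type vector, and it already gives the second equality $|A(k,l)|=|B(k,l-k)|$, since reversing and subtracting $1$ from each coordinate is a bijection $A(k,l)\to B(k,l-k)$.

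For the first equality, I would count partitions $\lambda$ of $n$ by summing over the Durfee square size $k$ and over the quantity $l:=k+a_1+\cdots+a_k$, i.e. $k$ plus the total arm length. For fixed $k$ and $l$, the arm data $(a_k,\dots,a_1)$ (in increasing order, after adding $1$ to each) ranges over $A(k,l)$, equivalently the raw arm vector ranges over $B(k,l-k)$; the leg data $(b_k,\dots,b_1)$ ranges over the set of strictly increasing nonnegative sequences of length $k$ summing to $n-l$, which is exactly $B(k,n-l)$ (note no shift is needed on the leg side since legs are allowed to be $0$). Since arm and leg data are chosen independently once $k$ is fixed, the number of partitions of $n$ with a given $(k,l)$ is $|A(k,l)|\,|B(k,n-l)|=|B(k,l-k)|\,|B(k,n-l)|$. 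Summing over all valid $k$ and $l$ — and checking the index ranges $0\le k\le l\le n$ are exactly the ones for which these sets can be nonempty ($k$ cells on the diagonal force $l\ge k$, and $k+\text{arms}\le n$ forces $l\le n$) — yields $p(n)$ on the left.

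The main obstacle, and the only place requiring genuine care rather than bookkeeping, is verifying that the shift map is a well-defined bijection with the claimed constraints: one must confirm that $a_1>\cdots>a_k\ge 0$ with $\sum a_i=l-k$ corresponds under $x_s=a_{k+1-s}+1$ precisely to $1\le x_1<\cdots<x_k$ with $\sum x_s=l$, including the boundary cases $k=0$ (the empty sequence, $l=0$, contributing the partition with empty Durfee square, i.e. $\lambda=\emptyset$ when also $n-l=0$) and the case $a_k=0$. After that, the identity $p(n)=\sum_{0\le k\le l\le n}|A(k,l)|\,|B(k,n-l)|$ is just the Frobenius-symbol bijection reorganized by the statistic $l$, and the second form follows by substituting $|A(k,l)|=|B(k,l-k)|$. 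I would also remark that consistency can be checked directly against the generating function $\sum_k |B(k,\cdot)|$-type series, but the bijective argument above is cleaner and is what I expect the authors to use.
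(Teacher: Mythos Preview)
Your argument is correct and is essentially the paper's own proof: the paper builds the bijection via a ``herringbone'' Ferrers diagram and then remarks that the resulting array $\bigl(\begin{smallmatrix} y_k & \cdots & y_1\\ x_k-1 & \cdots & x_1-1\end{smallmatrix}\bigr)$ is exactly the Frobenius symbol, which is precisely what you invoke directly. The only cosmetic difference is that the paper absorbs the $+1$ shift into the leg side (so $x_s=b_{k+1-s}+1$, $y_s=a_{k+1-s}$) whereas you put it on the arm side; the two are related by conjugating the partition and are otherwise identical.
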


\begin{proof}
Since $|A(k,l)|=|B(k,l-k)|$, we only have to prove the first equal sign in \eqref{3.3}.

In fact, there is a bijection between the set of partitions of $n$ and 
\[
\bigcup_{0\le k\le l\le n}A(k,l)\times B(k,n-l).
\]
Given $(x_1,\dots,x_k)\in A(k,l)$ and $(y_1,\dots,y_k)\in B(k,n-l)$, we can build a Ferrers diagram herringbone style that corresponds to the a partition of $n$; see Figure~\ref{F2}. 

%\bigskip
%%%%%%%%%%%%%%%%%%%% Fig 2 %%%%%%%%%%%%%%%%%%%%%%%%%%%%%%%%%%%%%%
\begin{figure}[h]
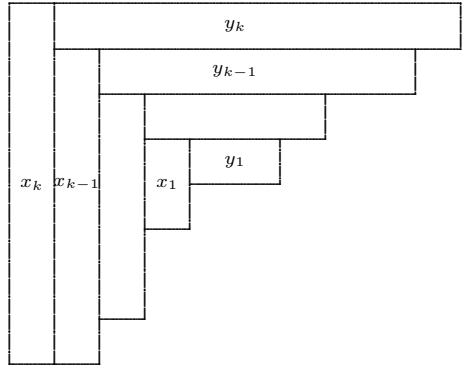

\[
\beginpicture
\setcoordinatesystem units <6mm,6mm> point at 0 0
\setlinear 
\plot 0 0  0 8  1 8  1 0  0 0 /
\plot 1 8  10 8  10 7  1 7 /
\plot 1 0  2 0  2 7 /
\plot 2 6  9 6  9 7 /
\plot 2 1  3 1  3 6 /
\plot 3 5  7 5  7 6 /
\plot 3 3  4 3  4 5 /
\plot 4 4  6 4  6 5 /
\put {$\scriptstyle x_k$} at 0.5 4
\put {$\scriptstyle x_{k-1}$} at 1.5 4 
\put {$\scriptstyle x_1$} at 3.5 4
\put {$\scriptstyle y_k$} at 5 7.5
\put {$\scriptstyle y_{k-1}$} at 5 6.5 
\put {$\scriptstyle y_1$} at 5 4.5
\endpicture
\] 
\vskip 2cm
\caption{The Ferrers diagram corresponding to $(x_1,\dots,x_k)$, $(y_1,\dots,y_k)$}\label{F2}
\end{figure}

%%%%%%%%%%%%%%% end Fig 2 %%%%%%%%%%%%%%%%%%%%%%%%%%%%%%%%%%%%%%%

Conversely, given a Ferrers diagram, we can retrieve two sequences $(x_1,\dots,x_k)\in A(k,l)$ and $(y_1,\dots,y_k)\in B(k,n-l)$ in the order of $x_k,y_k,x_{k-1},y_{k-1},\dots,x_1,y_1$ as depicted in Figure~\ref{F2}. (Note that it is necessary that $y_1$ be allowed to be $0$.) Therefore we have the desired bijection.
\end{proof}

\noindent{\bf Remark.}
The herringbone construction in Figure~\ref{F2} is a slight variation of the Frobenius symbol of a partition. The array
\[
\left(\begin{matrix} 
y_k & y_{k-1}&\cdots& y_1\cr
x_k-1& x_{k-1}-1&\cdots&x_1-1\end{matrix}\right)
\]
is the Frobenius symbol of the partition in Figure~\ref{F2} \cite{And84}. Thus Lemma~\ref{T3.2} is essentially counting of Frobenius symbols of partitions. For a comprehensive treatise of Frobenius symbols and generalized Frobenius partitions, see \cite{And84}.

\begin{proof}[Proof of Theorem~\ref{T3.1}]
Each isomorphic class of pointed abelian groups of order $q^n$ is uniquely determined by the following data:
\[
\begin{split}
&1\le e_1<\cdots<e_k,\ n_1,\dots,n_k>0\ \text{such that}\ n_1e_1+\cdots+n_ke_k=n;\cr
&\bigl((f_1,i_1),\dots,(f_l,i_l)\bigr)\in\mathcal I(e_1,\dots,e_k).
\end{split}
\]
Each set of such data can be obtained exactly once through the following steps;

\medskip

{\bf Step 1.} Choose $(x_1,\dots,x_l)\in A(l,u)$, $(y_1,\dots,y_l)\in B(l,m-u)$, where $0\le l\le u\le m\le n$.

\medskip

{\bf Step 2.} Choose a partition $\lambda$ of $n-m$. The union of $\lambda$ and $x_1+y_1,\dots,x_l+y_l$ is a partition $\mu$ of $n$. Write
\[
\mu=(\underbrace{e_1,\dots,e_1}_{n_1},\dots, \underbrace{e_k,\dots,e_k}_{n_k}),
\]
where $1\le e_1<\cdots<e_k$, $n_1,\dots,n_k>0$.

\medskip

{\bf Step 3.} Let $f_s=x_s$, $1\le s\le l$ and let $i_s$ be defined by $x_s+y_s=e_{i_s}$.

To observe how these steps are actually carried out, see Example~\ref{E3.3}.

For each $0\le m\le n$, the number of choices in step 1 is 
\[
\sum_{0\le l\le u\le m}|A(l,u)||B(l,m-u)|=p(m)\qquad\text{(by Theorem~\ref{T3.2})}.
\]
The number of choices in step 2 is $p(n-m)$ and the number of choices in step 3 is 1. Thus we have
\[
N(n)=\sum_{0\le m\le n}p(m)p(n-m).
\]
\end{proof} 

\begin{exmp}\label{E3.3}
\rm
Assume $n=20$.

{\em Example of step 1.} Choose $(1,3,5)\in A(3,9)$ and $(1,2,4)\in B(3,7)$. Note that $m=16$.

{\em Example of step 2.} $n-m=4$. Choose $\lambda=(1,1,2)\vdash 4$. Then
\[
\begin{split}
\mu&\;=(1,1,2,2,5,9),\cr
(e_1,e_2,e_3,e_4)&\;=(1,2,5,9),\cr
(n_1,n_2,n_3,n_4)&\;=(2,2,1,1).
\end{split}
\]

{\em Step 3.} We have $(f_1,f_2,f_3)=(1,3,5)$. Since $x_1+y_1=2=e_2$, we have $i_1=2$. In the same way, $(i_1,i_2,i_3)=(2,3,4)$.
\end{exmp}

%%%%%%%%%%%%%%%%%%%%%%%%%%%%%%%%%%%%%%%%

\section*{Acknowledgment}

We thank Professor George E. Andrews for pointing out the connection of Lemma~\ref{T3.2} with the Frobenius symbol of a partition.

%%%%%%%%%%%%%%%%%%%%%%%%%%%%%%%%%%%%%%%%
%          references 
%%%%%%%%%%%%%%%%%%%%%%%%%%%%%%%%%%%%%%%% 


\begin{thebibliography}{99}

\bibitem{And84}
G. E. Andrews,  {\it Generalized Frobenius Partitions}, Mem. Amer. Math. Soc. 49 (1984), no. 301.

\bibitem{CKS}
S. Carter, S. Kamada, M. Saito, {\it Surfaces in 4-Space}, Encyclopaedia of Mathematical Sciences, 142. Low-Dimensional Topology, III. Springer-Verlag, Berlin, 2004.

\bibitem{CEHSY}
W. E. Clark, M. Elhamdadi, X. Hou, M. Saito,  T. Yeatman {\it Connected quandles associated with pointed abelian groups}, arXiv:1107.5777v1.

\bibitem{Gal88}
V. M. Galkin, {\it Quasigroups}, Itogi Nauki i Tekhniki. Ser. Algebra. Topol. Geom., {\bf 26}, VINITI, Moscow, 1988,  3 -- 44 (Russian), translated in Journal of Soviet Mathematics, 1990, 49:3, 941 -- 967. 

 
\bibitem{Hil-Rhe07}
C. J. Hillar and D. L. Rhea, {\it Automorphisms of finite abelian groups}, Amer. Math. Monthly {\bf 114} (2007), 917 –- 923.

\bibitem{Ran07}
A. Ranum, {\it The group of classes of congruent matrices with application to the group of isomorphisms of any abelian group}, Trans. Amer. Math. Soc. {\bf 8} (1907), 71 –- 91.

\bibitem{OEIS} 
{\it The On-Line Encyclopedia of Integer Sequences},  
{\tt http://oeis.org/}

%\bibitem{CJKLS03} J. S. Carter, D. Jelsovsky, S. Kamada, L. Langford, M. Saito, {\it Quandle cohomology and state-sum invariants of knotted curves and surfaces}, Trans. Amer. Math. Soc. {\bf 355} (2003), 3947 -– 3989.

%\bibitem{Eva93} A. B. Evans, {\it Orthomorphism Graphs of Groups}, Lecture Notes in Mathematics 1535. Springer-Verlag, Berlin, 1992.

%\bibitem{Fen-Rou92} R. Fenn and C. Rourke, {\it Racks and links in codimension two}, J. Knot Theory Ramifications {\bf 1} (1992), 343 –- 406.


%\bibitem{Joy82} D. Joyce, {\it A classifying invariant of knots, the knot quandle}, J. Pure Appl. Algebra {\bf 23} (1982), 37 –- 65.

%\bibitem{Moc05} T. Mochizuki, {\it The 3-cocycles of the Alexander quandles $\Bbb F_q[T]/(T-\omega)$}, Algebr. Geom. Topol. {\bf 5} (2005), 183 -– 205. 

%\bibitem{Nel} S. Nelson, {\it Classification of finite Alexander quandles},  Topology Proc. {\bf 27} (2003), 245 -– 258.

%\bibitem{Yet06} D. N. Yetter,  {\it Quandles and monodromy}, J. Knot Theory Ramifications {\bf 12} (2003), 523 -– 541.

\end{thebibliography}
\end{document}